\documentclass{amsart}
\usepackage{amsfonts,amssymb,amsmath,amsthm}
\usepackage{url}
\usepackage{enumerate}
\usepackage{graphicx,amssymb,mathrsfs,amsmath,latexsym,amsfonts,amsthm}
\usepackage{amssymb,amsmath}

\usepackage{latexsym,bm}

\usepackage{dsfont}
\usepackage{amsmath,amssymb,amsthm}

\usepackage{latexsym}

\usepackage{amssymb}

\usepackage{stmaryrd}

\usepackage{float}

\usepackage{psfrag}

\usepackage{xcolor}

\usepackage{enumerate}

\usepackage{manfnt,mathrsfs}

\usepackage{epstopdf}
\urlstyle{sf}
\newtheorem{theorem}{Theorem}[section]
\newtheorem{lemma}[theorem]{Lemma}

\theoremstyle{definition}

\numberwithin{equation}{section}

\newcommand{\A}{\mathcal{A}}

\newcommand{\V}{\mathcal{V}}

\def\tu{{\rm Tur{\'a}n\,}}

\author{Zejun Huang, Zhenhua Lyu}
\address{Zejun Huang,
Institute of Mathmatics\\
Hunan University\\
Changsha  410082, P.R. China.}
\address{
	Zhenhua Lyu, College of Mathematics and Econometrics, Hunan University, Changsha  410082, P.R. China.  }
\email{mathzejun@gmail.com (Huang); lyuzhh@outlook.com (Lyu)}
\thanks{ }

\keywords{Digraph, cycle, path, \tu problem }
\subjclass{05C35, 05C20}
%% NB There should be only one primary classification, and zero or
%more secondary classifications.
\begin{document}

\title[Extremal digraphs avoiding an orientation of
$C_4$]{Extremal digraphs avoiding an orientation of
	$C_4$}

\begin{abstract}
	Let $P_{2,2}$ be the  orientation of $C_4$ which  consists of two 2-paths with the same initial and  terminal vertices.
In this paper, we determine the maximum size of $P_{2,2}$-free digraphs of order $n$ as well as the extremal digraphs attaining the maximum size when $n\ge 13$.
\end{abstract}
 \maketitle
	
	\section{Introduction}
	
	Digraphs in this paper are strict, i.e., they do not allow loops or parallel arcs. For digraphs, we abbreviate directed
	paths and directed cycles as paths and cycles, respectively. The number of vertices in a digraph is called its
	{\it order} and the number of arcs its {\it size}.  Given two digraphs $D$ and $H$, we say $D$ is {\it $H$-free} if $D$ does not contain an $H$ as its subgraph. Denote by   $K_r$ (or $\overrightarrow{K}_r$) the complete graph (or digraph) of order $r$ and $C_r$ (or $\overrightarrow{C}_r$)  the cycle (or directed cycle)  with $r$ vertices.

	\tu problem  is a hot topic in   graph theory. It concerns the possible largest number of edges in graphs without given subgraphs and the extremal graphs achieving that maximum number of edges. It is initiated by Tur{\'a}n's generalization of Mantel's theorem \cite{PT,PT2}, which determined the maximum size of $K_r$-free graphs on $n$ vertices and the unique extremal graph  attaining that
	maximum size. Most results on classical \tu problems concern undirected graphs and only a few \tu problems
	on digraphs have been investigated; see \cite{BB,BES,BES2,BH,BS,HL,HZ,AS}. In this paper we consider a \tu problem on digraphs.

	A natural \tu problem on digraphs is determining the maximum size of a $\overrightarrow{K}_r$-free digraph of a given order, which has been solved in \cite{JM}. Brown and Harary \cite{BH}  determined the precise extremal sizes and extremal digraphs for digraphs avoiding a tournament, which is an orientation of a complete graph. They also studied digraphs avoiding a direct sum of two tournaments, or a digraph on at most 4 vertices where any two vertices are joined by at least one arc.  By using dense matrices, asymptotic results on  extremal digraphs avoiding a family of digraphs were presented in \cite{BES,BES2,BES3}. In \cite{HDT1,HDT2} the authors determined the extremal sizes of  $\overrightarrow{C}_2$-free digraphs avoiding $k$ directed paths with the same initial vertex and terminal vertex for $k=2,3$.   Maurer, Rabinovitch and Trotter \cite{MRT} studied the extremal  transitive $\overrightarrow{C}_2$-free digraphs which  contain at most one directed path from $x$ to $y$  for any two distinct vertices $x,y.$ In \cite{ HL,HZ, HW}, the authors studied the extremal digraphs which have no distinct walks of a given length $k$ with the same initial vertex and the same terminal vertex.

	Notice that the $k$-cycle is a  generalization  of the triangle  when we view a triangle
	as a 3-cycle in  undirected graphs. Another generalization of Mantel's Theorem is the
	\tu problem  for $C_k$-free  graphs. However, it is very difficult to determine the exact maximum size of $C_k$-free graphs of a given order even for $k=4$; see \cite{DDR, PE,FK,ZF, FS,GHO,  IR, MS, TT, KZ}.

	Following the direction of Brown and Harary, it is interesting to investigate the extremal problem for digraphs avoiding an specific orientation of a cycle.
 Among all the orientations of $C_k$,  the directed cycle
	$\overrightarrow{C}_k$ is one of
	the most natural orientations, whose \tu number is difficult to determine and we leave this problem for future research.
	
	When $k$ is even, another natural orientation of $C_k$ is  the  union of two directed $k/2$-paths, which share the same initial vertex.  We will consider the case $k=4$ in this paper. Let $P_{2,2}$  be the following orientation of $C_4$.
	\begin{figure}[H]
		\centering
		\includegraphics[width=0.8in]{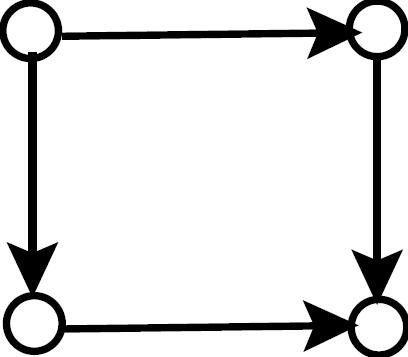}\hspace{0.8cm}\\
		$P_{2,2}$
	\end{figure}

	Let $ex(n)$ be  the maximum size  of $P_{2,2}$-free digraphs of order $n$ and $EX(n)$ be the set of $P_{2,2}$-free  digraphs attaining $ex(n)$.  In this paper, we solve the following  problem  for   $n\ge 13$.

	{\it {\bf Problem.} Let $n$ be a positive integer. Determine $ex(n)$ and $EX(n)$.}

	\section{Main results}

	In order to present our results, we need the following notations and definitions.
	
	Denote by  $D=(\mathcal{V},\mathcal{A})$ a digraph with  vertex set $\mathcal{V}$ and arc set $\mathcal{A}$. For a subset $X\subset \mathcal{V}$, we denote by $D(X)$ the subdigraph of $D$ induced by $X$.  For convenience, if $X=\{x\}$ is a singleton, it will be abbreviated as $x$.

	For $u,w\in \V$, if there is an arc from $u$ to $w$, then we say $w$ is a {\it successor} of $u$, and $u$ is a  {\it predecessor} of $w$. The notation  $(u,w)$ or $u\rightarrow w$ means there is an arc from $u$ to $w$;  $u\nrightarrow w$ means there exists no arc from $u$ to $w$;   $u\leftrightarrow w$ means both $u \rightarrow w$ and $w\rightarrow u$. For $S,T\subset \V$,  the notation $S\rightarrow T$ means there exists a vertex $x\in S$ such that $x\rightarrow y$ for any vertex $y\in T$;  $S\nrightarrow T$  means there is no arc from $S$ to $T$.   If every vertex in $S$ has a unique successor in $T$ and each vertex in $T$ has a unique predecessor in $S$, we say $S$ {\it matches} $T$. Note that $S$ matching $T$ indicate  $|S|=|T|$.
	We denote by $\A(S,T)$ the set of arcs from $S$ to $T$, which will be abbreviate as $\A(S)$ when $S=T$.  The cardinality of $\A(S,T)$ is denoted by $e(S,T)$.

 \iffalse Denote by $\Delta^+(D)$ and $\Delta^-(D)$   the maximum outdegree and indegree of $D$.\fi
	
	For $W,S\subset \V$,  denote by $$N^+_{W}(u)=\{x\in W|(u,x)\in \A\},$$ $$ N^-_{W}(u)=\{x\in W|(x,u)\in \A\},$$and
	$$ N^+_{W}(S)=\bigcup\limits_{u\in S}N^+_{W}(u),$$which are   simplified as $N^+(u)$, $N^-(u)$ and $N^+(S)$ when $W=\V$.

	If two digraphs $G$ and $H$ has disjoint vertex sets, their union is called a {\it disjoint union}.
	
  If a digraph $D$ is acyclic and there is a vertex $u$ such that there is a unique directed path from $u$ to any other vertex, then we say $D$ is  an {\it arborescence} with root $u$.  If the maximum length of these paths is at most $r$, then we say $D$ is  an {\it $r$-arborescence} with root $u$. Moreover, if $D$ is a 1-arborescence, then we also say $D$ is an {\it out-star} with center $u$.
	
We will use $S(x,y)$ and $T(x,y)$ to denote the following digraphs, whose orders will be clear from the context. Note that
$S(x,y)$ is the union of a 2-cycle $x\leftrightarrow y$ and a out-star with center $y$; $T(x,y)$ is the union of a  2-cycle $x\leftrightarrow y$ and two 2-arborescences with roots $x$ and $y$.

	\begin{figure}[H]
		\centering
		\includegraphics[width=1.5in]{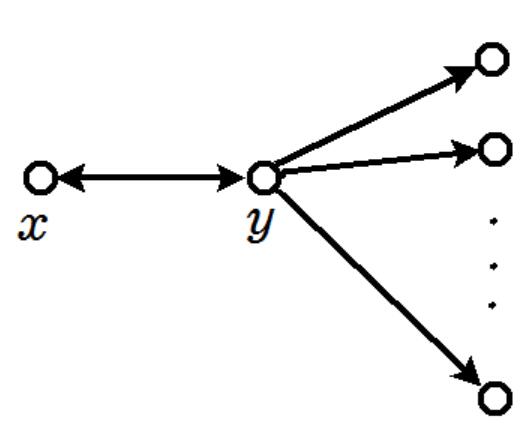}\hspace{0.8cm}
		\includegraphics[width=2.5in]{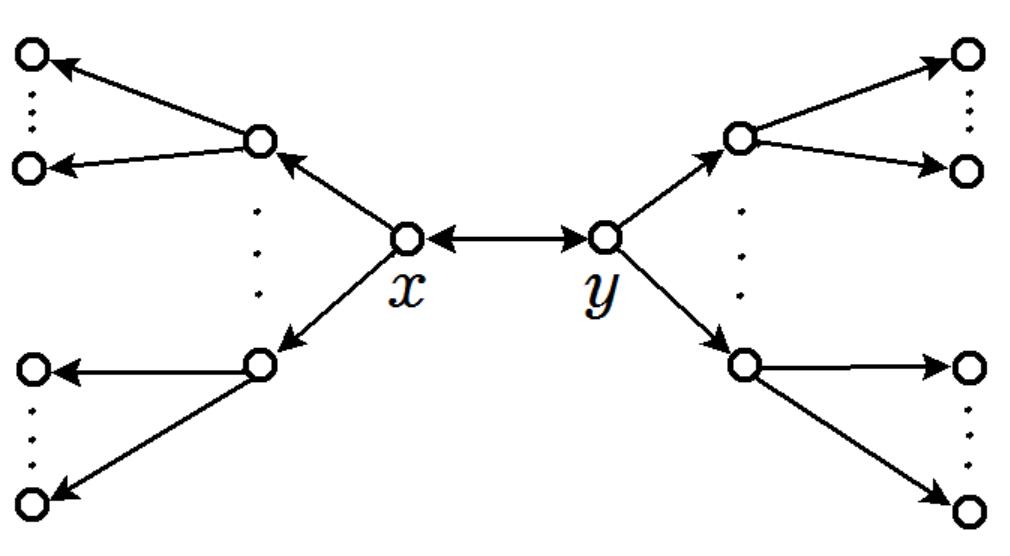}\hspace{0.8cm}\\	$S(x,y)$\hspace{4.4cm}$T(x,y)$
	\end{figure}

	Now we present the following nine classes of digraphs of order $n$. Each of these digraphs has  vertex partition $\V_1\cup \V_2$ with $|\V_1|=\lfloor\frac{n}{2}\rfloor+1$.

	\begin{figure}[H]
		\centering
		\includegraphics[width=1.6in]{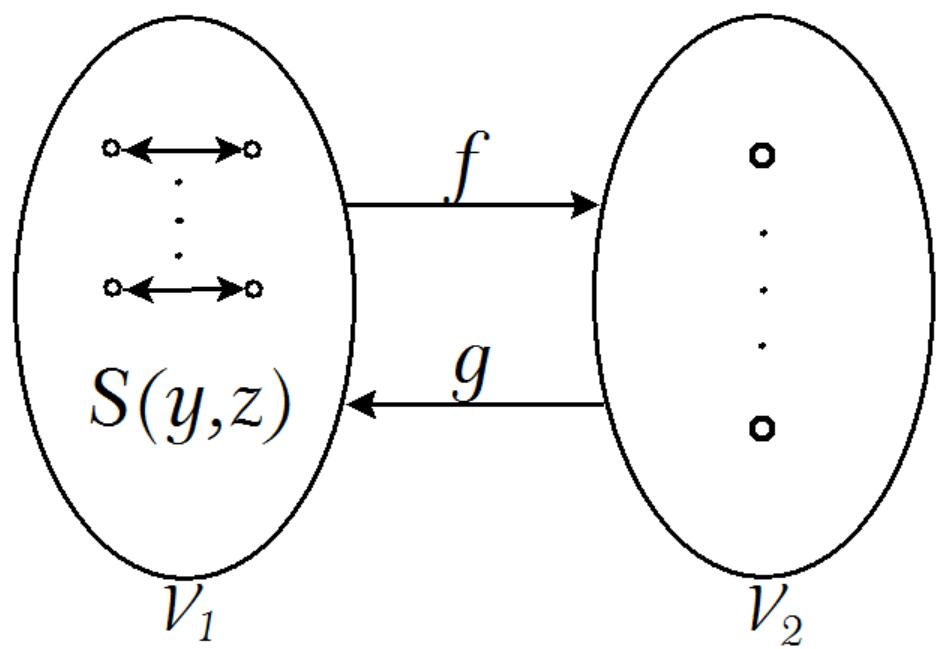}\hspace{0.1cm}
		\includegraphics[width=1.6in]{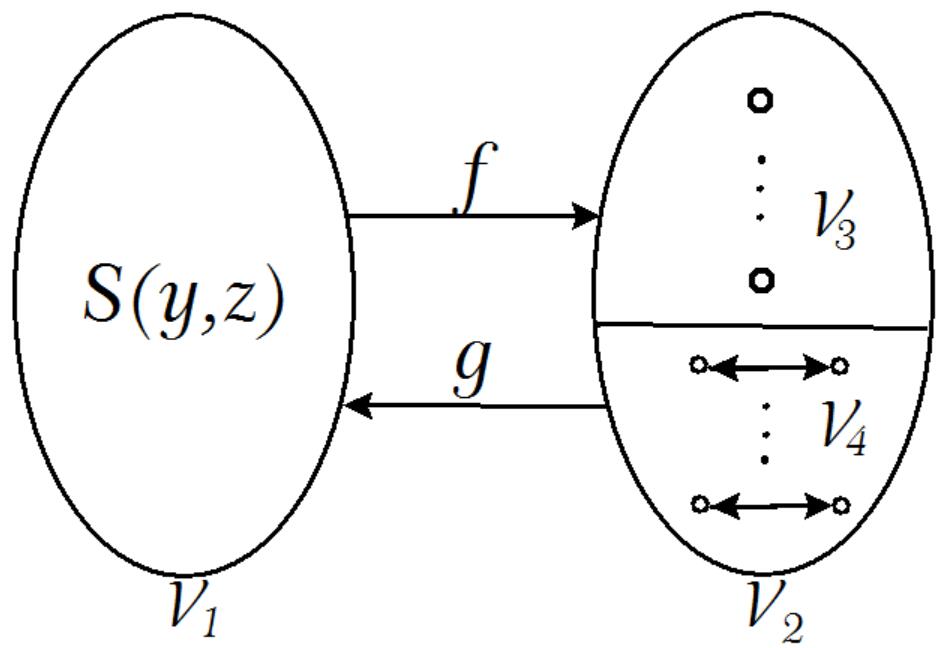}\hspace{0.1cm}
			\includegraphics[width=1.6in]{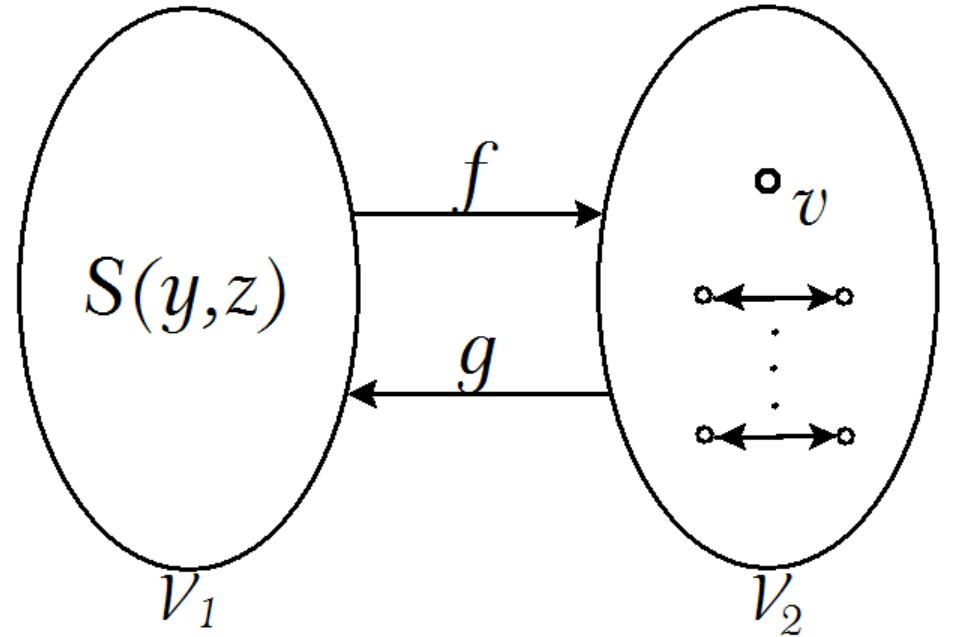} \\
		$D_1$\hspace{4cm}$D_2$\hspace{4cm}$D_3$\\
		\vspace{0.3cm}
		\includegraphics[width=1.6in]{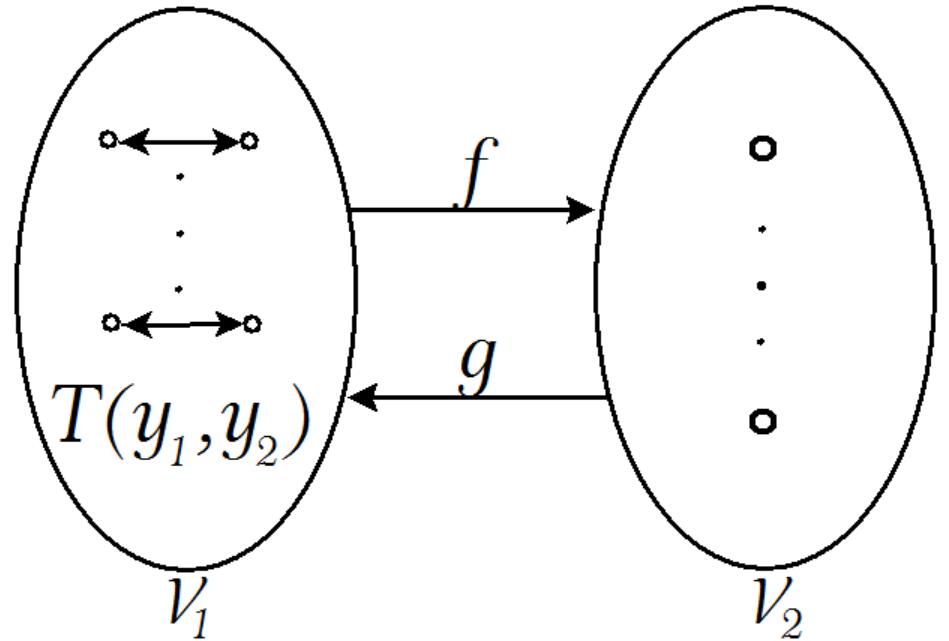}\hspace{0.1cm}
		\includegraphics[width=1.6in]{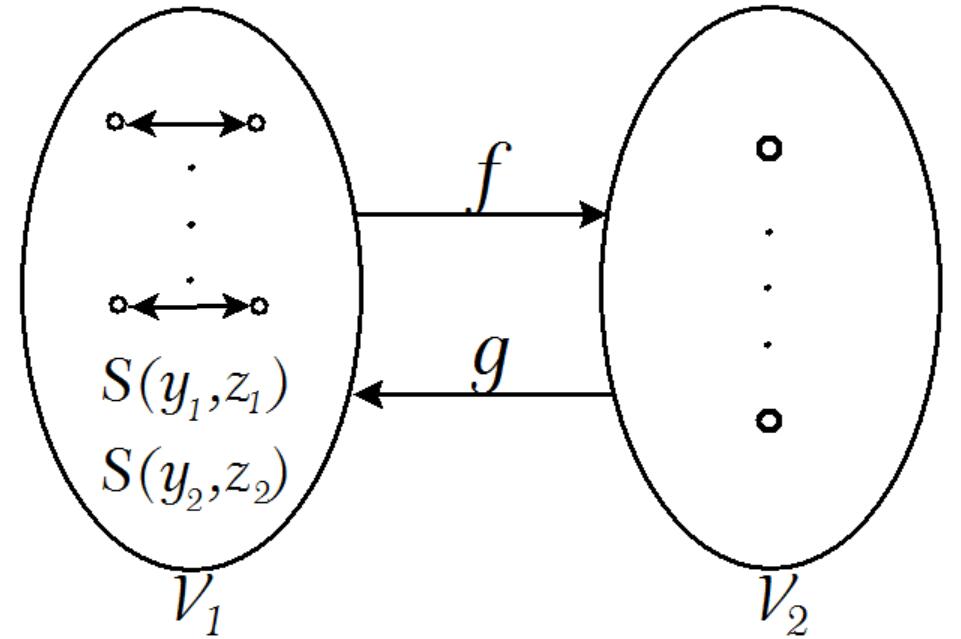}\hspace{0.1cm}
		\includegraphics[width=1.6in]{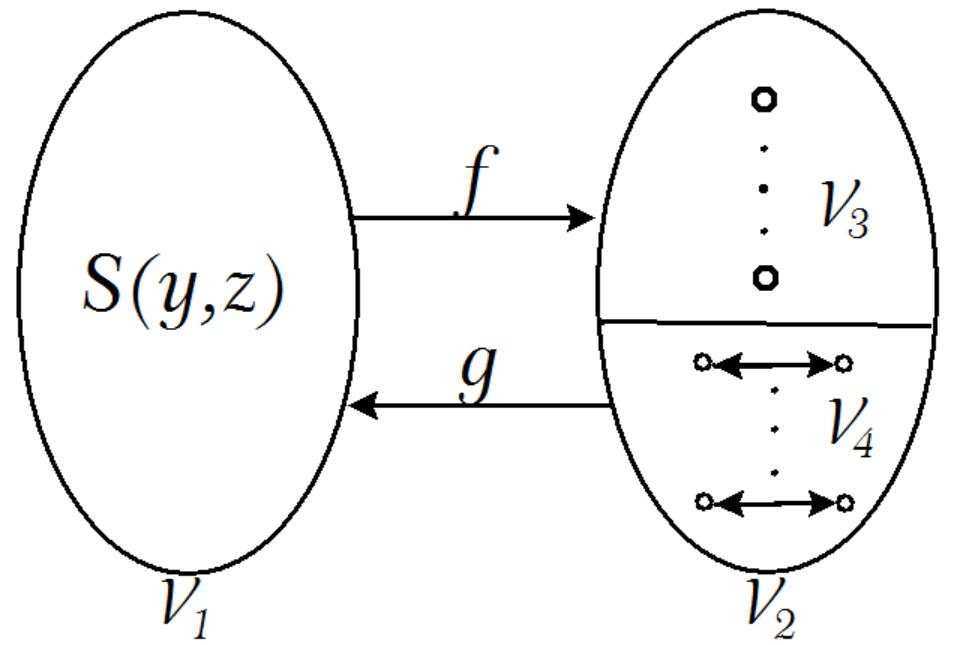} \\
		$D_4$\hspace{4cm}$D_5$\hspace{4cm}$D_6$\\
	\vspace{0.3cm}	\includegraphics[width=1.6in]{D6.jpg}\hspace{0.1cm}
	\includegraphics[width=1.6in]{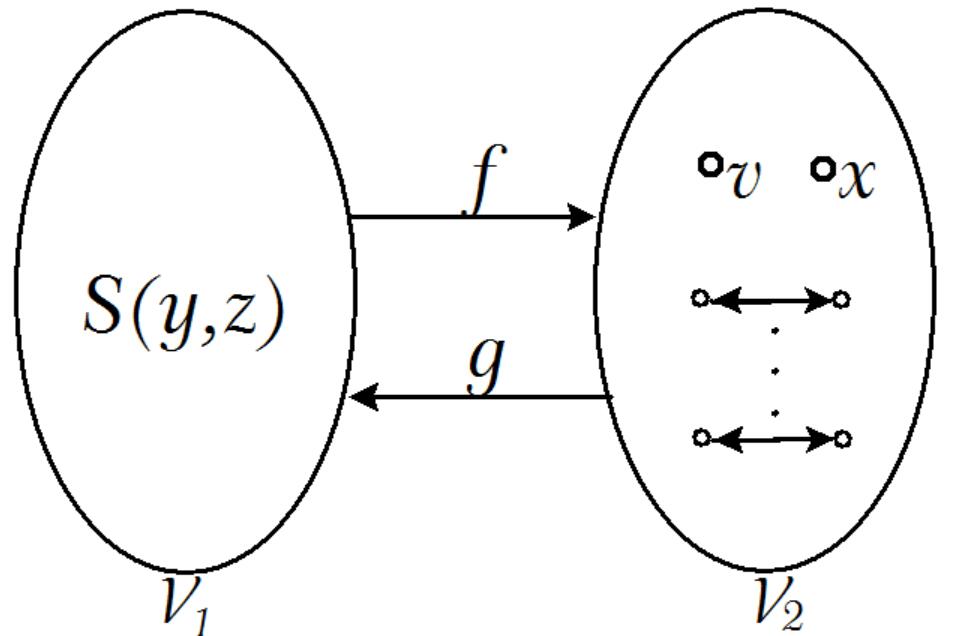}\hspace{0.1cm}
	\includegraphics[width=1.6in]{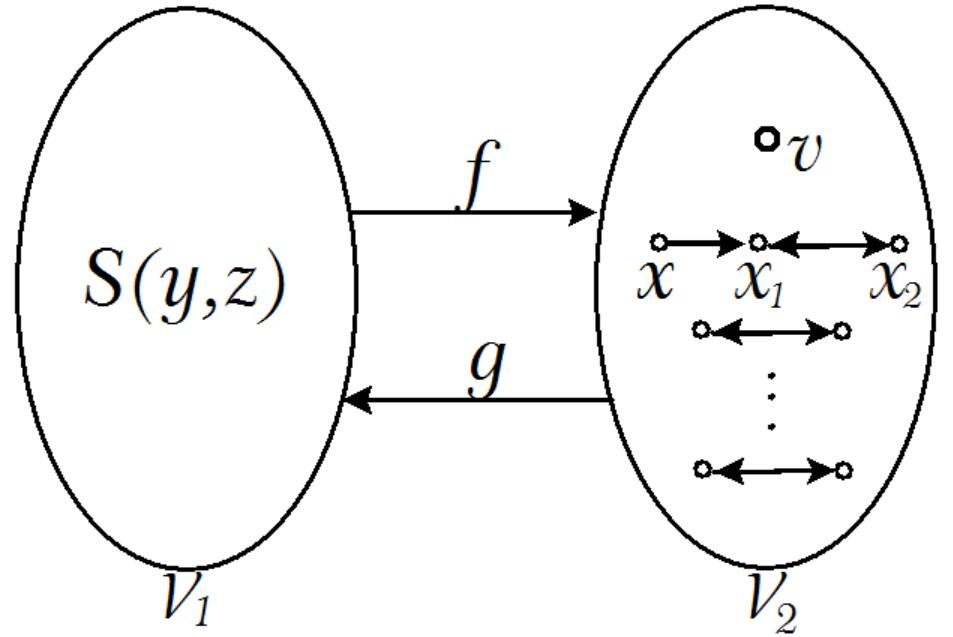} \\
	$D_7$\hspace{4cm}$D_8$\hspace{4cm}$D_9$
\end{figure}

For the order $n$ of these digraphs, it is odd for $D_1$,  $D_2$, and even for the others. Moreover, $n/2$ is even for $D_3$ and odd for $D_4,\ldots D_9$.

	In $D_1$,
	$D(\V_1)=S(y,z)$ or it is the disjoint union of $S(y,z)$ and some 2-cycles;
	$D(\V_2)$ is an empty digraph;
	$f$ means $$\V_1\backslash \{y\}~ {\rm matches} ~\V_2;$$
	$g$ means $$u\rightarrow \V_1{\rm ~for~ all~} u\in \V_2.$$

	In $D_2$, $D(\V_1)=S(y,z)$; $\V_2$ is partitioned as $\V_2=\V_3\cup \V_4$ such that $D(\V_3)$ is empty and $D(\V_4)$ is the disjoint union of 2-cycles;  $f$ means $$\V_1\backslash \{y\}~ {\rm matches} ~\V_2{\rm~ with~}  z\rightarrow w, w\in \V_3;$$
	$g$ means $$u\rightarrow \V_1{\rm ~for~ all~}u\in V_3 {\rm~and~} u\rightarrow \V_1\backslash \{z\}{\rm ~for~ all~}  u\in \V_4.$$

	In $D_3$, $D(\V_1)=S(y,z)$;  $D(\V_2)$ is the disjoint union of $(n/4-1)$ 2-cycles and an isolated vertex $v$; $f$ means
	$$z\rightarrow v~{\rm and} ~\V_1\backslash \{y,z\}~ {\rm matches} ~\V_2;$$ $g$ means
	$$v\rightarrow \V_1~{\rm and} ~u\rightarrow \V_1\backslash \{ z\}{\rm ~for~ all~} u\in\V_2\setminus\{v\}.$$

	In $D_4$,  $D(\V_1)=T(y_1,y_2)$ or it  is  the disjoint union of $T(y_1,y_2)$ and 2-cycles;
	$D(\V_2)$ is empty;
	$f$ means $$\V_1\backslash \{y_1,y_2\}{\rm ~matches~} \V_2;$$
	$g$ means $$u\rightarrow \V_1{\rm ~for~ all~} u\in \V_2.$$

	In $D_5$,  $D(\V_1)$ is the disjoint union of $S(y_1,z_1)$, $S(y_2,z_2)$ and  some 2-cycles, where the 2-cycles may vanish;  $D(\V_2)$ is empty;
	$f$ means
	$$\V_1\backslash \{y_1,y_2\}{\rm ~matches~} \V_2;$$  $g$ means  $$u\rightarrow \V_1{\rm ~for~ all~}u\in \V_2.$$

	In $D_6$,
	$D(\V_1)=S(y,z)$; $\V_2$ is partitioned as $\V_2=\V_3\cup \V_4$ such that $D(\V_3)$ is empty and $D(\V_4)$ is the disjoint union of 2-cycles, which may vanish;  $f$ means
	$$\V_1\backslash \{y,z\}{\rm ~matches~}\V_2;$$
	$g$ means $$u\rightarrow \V_1\backslash \{z\}{\rm ~for~ all~}u\in \V_4{\rm ~and~} u\rightarrow \V_1{\rm ~for~ all~}u\in \V_3.$$

	In $D_7$, $D(\V_1)$ and $D(\V_2)$ have the same structures as in $D_6$;  $f$ means
	$$\V_1\setminus\{x,y\} {\rm~ matches~} \V_2{\rm ~with~}z\rightarrow w, w\in \V_3,$$
	where $x$ is an arbitrary vertex in $\V_1\setminus\{y,z\}$;
	$g$ means $$u\rightarrow \V_1\backslash \{z\}{\rm ~for~ all~}u\in \V_4{\rm ~and~} u\rightarrow \V_1{\rm ~for~ all~}u\in \V_3.$$

	In $D_8$, $D(\V_1)=S(y,z)$; $D(\V_2)$ is the disjoint union of 2-cycles and two isolated vertices $v$ and $x$; $f$ means
	$$z\rightarrow v {\rm~and~} \V_1\backslash \{y,z\}{\rm~ matches~}\V_2;$$
	$g$ means $$v\rightarrow \V_1,~~u\rightarrow \V_1\backslash \{z\}{\rm ~for~ all~}u\in\V_2\setminus\{v,x\},$$
	and $$x\rightarrow \V_1\backslash \{z\}{\rm~or~} x\rightarrow \V_1\backslash \{w\} {\rm ~with~} w\in \V_1\setminus\{y,z\}{\rm ~ such~ that~}w\rightarrow v. $$

	In $D_9$, $D(\V_1)=S(y,z)$;
	$D(\V_2)$  is the disjoint union of some 2-cycles, a isolated vertex $v$ and the subgraph $D(\{x,x_1,x_2\})$ as in the diagram;
	$f$ means $$z\rightarrow v{\rm ~and~} \V_1\backslash \{y,z\} {\rm~ matches~}\V_2;$$
	$g$ means     $$v\rightarrow \V_1,~   u\rightarrow \V_1\backslash \{z\}{\rm ~for~ all~}u\in\V_2\setminus \{v,x\},$$and $$x\rightarrow \V_1\setminus\{z,x_2^*\}$$ with $x_2^*$ being the predecessor of $x_2$ in $\mathcal {V}_1$.
	
In addition, we need another digraph $D_{10}$, which shares the same structure  with $D_1$. In $D_{10}$, $D(\V_1)=S(y,z)$ or it is the disjoint union of $S(y,z)$ and some 2-cycles;
$D(\V_2)$ is an empty digraph; $f$ means
$$\V_1\backslash \{y,w\}{\rm ~matches~} \V_2$$
with $w$ being an arbitrary vertex in $\V_1\setminus\{y,z\}$;  $g$ means  $$u\rightarrow \V_1{\rm ~for~ all~}u\in \V_2.$$
	
	Note that each of the above diagrams represent  a class of digraphs. For convenience, we  also use $D_1, D_2,\ldots, D_{10}$ to indicate a specific digraph with the same structure as in the diagrams if it makes no confusion.
	
	Giving a digraph $D$, we denote by $D'$ the reverse of $D$, which is obtained by reversing the directions of all arcs of $D$. Given two digraphs $D$ and $H$, we say that   $D $ is an isomorphism of $ H$ if there exists a bijection $f$: $\V(D)\rightarrow\V(H)$ such that $(u,v)\in \A(D)$ if and only if $(f(u),f(v))\in \A(H)$.\\
	
	Now we state our main result as follows.

	\begin{theorem}\label{th1}
		Let  $n\ge 13$ be an integer. Then
		\begin{equation}\label{eq0}
		ex(n)=
		\begin{cases}
		\frac{n^2+4n-1}{4},&if~ n ~is ~odd;\\
		\frac{n^2+4n}{4},&if~\frac{n}{2}~is~even;\\
		\frac{n^2+4n-4}{4},&if~\frac{n}{2}~is~odd.
		\end{cases}
		\end{equation}
		Moreover, $D\in EX(n)$ if and only if
		\begin{itemize}
			\item[(1)] $n$ is odd, and $D$ or $D'$ is an isomorphism of $D_1$ or $D_2$;
			\item[(2)] $n/2$ is even, and $D$ or $D'$ is an isomorphism of $D_3$;
			\item[(3)] $n/2$ is odd, and $D$ or $D'$ is an isomorphism of $D_i$ with $i\in\{4,5,\ldots,10\}$.
		\end{itemize}
	\end{theorem}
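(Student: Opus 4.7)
The plan is to establish the formula \eqref{eq0} together with the structural characterisation in two stages: a construction-and-verification phase for the lower bound, followed by a structural forcing argument for the upper bound that simultaneously identifies the extremal digraphs. Because reversing all arcs sends $P_{2,2}$ to $P_{2,2}$, the family of $P_{2,2}$-free digraphs is closed under $D\mapsto D'$, which explains why the extremal list is stated up to reversal.

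\textbf{Lower bound.} For each $i\in\{1,\dots,10\}$ I would first verify that $D_i$ is $P_{2,2}$-free by classifying 2-paths $u\to w\to v$ according to which of $\V_1$ or $\V_2$ contains the source, middle, and target. In every such type the matching property of $f$, the fact that the non-centre vertex $y$ of the out-star $S(y,z)$ has no outgoing arcs in $D(\V_1)$, and the carefully tailored restriction in $g$ on which vertices of $\V_2$ send an arc to $z$ together force the middle vertex $w$ to be determined by the pair $(u,v)$. Counting arcs by decomposing $|\A(D_i)|=|\A(D(\V_1))|+|\A(D(\V_2))|+|f|+|g|$ and plugging in $|\V_1|=\lfloor n/2\rfloor+1$ yields \eqref{eq0} parity case by parity case.

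\textbf{Upper bound.} Let $D=(\V,\A)$ be $P_{2,2}$-free with maximum size $m$. The $P_{2,2}$-free condition translates to: for every ordered pair $u\neq v$, at most one vertex $w$ satisfies $u\to w\to v$. Summing over middle vertices gives the master inequality
\[
\sum_{w\in\V}d^+(w)d^-(w)-\sum_{w\in\V}|N^+(w)\cap N^-(w)|\;\le\;n(n-1).
\]
This inequality is only first-order tight; to reach \eqref{eq0} I would pick a vertex $z$ of maximum out-degree and set $\V_1=N^+(z)\cup\{z\}$, $\V_2=\V\setminus\V_1$. Applying $P_{2,2}$-freeness with $z$ as common source shows that each $v\in\V\setminus\{z\}$ has at most one predecessor in $N^+(z)$, so the arcs leaving $\V_1\setminus\{z\}$ form essentially a matching, i.e.\ $e(\V_1\setminus\{z\},\V)\le n-1$. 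A dual argument at an in-degree maximiser, combined with the master inequality, pins $|\V_1|$ down to $\lfloor n/2\rfloor+1$ and forces the arcs from $\V_2$ to $\V_1$ to be almost complete bipartite.

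\textbf{Identifying templates and main obstacle.} With this rough partition in hand I would analyse internal structure. Inside $\V_1$, any out-arc from a vertex other than the out-star centre $z$, once composed with a $g$-arc from some $u\in\V_2$, would create two 2-paths from $u$ to a common target, contradicting $P_{2,2}$-freeness; this forces $D(\V_1)$ to be an $S(y,z)$, $T(y_1,y_2)$ or $S(y_1,z_1)\cup S(y_2,z_2)$ skeleton together with disjoint 2-cycles. Inside $\V_2$, analogous forbidding together with extremality compels $D(\V_2)$ to be a disjoint union of 2-cycles plus a small number of isolated vertices whose count depends on $|\V_2|\bmod 2$. Finally, which $u\in\V_2$ is \emph{allowed} to satisfy $u\to z$ is determined by another exchange argument: the presence of $u\to z$ together with $z\to w$ creates a 2-path $u\to z\to w$, and to keep the multiplicity at most one, $u$ must be constrained elsewhere. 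The principal obstacle is enumerating these ``defect'' configurations when $n/2$ is odd: the templates $D_6$--$D_{10}$ arise precisely as different ways of distributing one extra arc or swap around the imperfection, and ensuring that none is missed and none double-counted requires delicate case-work. The hypothesis $n\ge 13$ is what gives enough room in $\V_2$ for these exchange arguments to terminate unambiguously.
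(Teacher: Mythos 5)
Your skeleton coincides with the paper's: verify the ten templates and count arcs for the lower bound, then take a vertex $v$ of maximum out-degree $k$, set $\V_1=N^+(v)$, $\V_2=\V\setminus\V_1$, and bound $e(D)=e(\V_2,\V)+e(\V_1,\V\setminus\{v\})+e(\V_1,v)$ using the fact that every vertex has at most one predecessor in $N^+(v)$. However, the upper bound as you describe it has a genuine gap at its quantitative core. This decomposition only yields $e(D)\le k(n-k)+(n-1)+\tau$ where $\tau=|N^+(v)\cap N^-(v)|$, and a priori $\tau$ can be as large as $k$; then the right-hand side is maximised near $k=(n+1)/2$ at about $(n^2+6n-3)/4$, which overshoots the target $(n^2+4n)/4+O(1)$ by roughly $n/2$. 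The paper closes this with Lemma \ref{le12}, proving $\tau(v)\le 2$ and that every vertex has at most one successor outside $N^+(v)$ ($\alpha\le 1$); crucially these are \emph{not} consequences of $P_{2,2}$-freeness alone but extremality arguments that play each putative configuration off against the already-established lower bound (\ref{eqh3.1}), and $\alpha\le1$ is then what drives Lemma \ref{le13} ($n/2\le k\le n/2+2$) and the ``almost complete bipartite'' structure of $\A(\V_2,\V_1)$. Your proposal contains no substitute for this step. In particular your master inequality cannot supply one: on the extremal digraphs the degree sequences are highly unbalanced (vertices of $\V_2$ have out-degree about $n/2$ but in-degree $1$), so $\sum_w d^+(w)d^-(w)$ is only about $3n^2/4$, well below $n(n-1)$, and the inequality is nowhere near tight enough to pin down $|\V_1|$ or the bipartite part. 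The ``dual argument at an in-degree maximiser'' is likewise unsubstantiated; the paper uses the normalisation that the maximum out-degree is at least the maximum in-degree only once, in a pigeonhole step ruling out $k=n/2$ in the even case.

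The template-identification phase is also much harder than your sketch suggests, though your guiding idea (composing an internal arc of $\V_1$ with a $g$-arc from $\V_2$ to create two $2$-paths) is indeed the engine of the paper's Case 1.1 and Claim 2. What is missing is the reduction that makes the case analysis tractable: the paper must first locate a maximum-out-degree vertex $z$ with $\tau(z)=1$ (Claim 1), which itself requires a page of argument splitting on where the ``defective'' vertex $x$ with $e(\V_1,x)=0$ sits, and only then does the enumeration of $D_1$--$D_{10}$ proceed. As written, your proposal establishes the lower bound and correctly identifies the shape of the argument, but the inequalities $\tau\le 2$ and $\alpha\le 1$ --- without which neither the value of $ex(n)$ nor the classification can be reached --- are asserted in effect rather than proved.
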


	{\it Remark.}  For digraphs with order less than 13, (\ref{eq0}) may not be true. For example, let $D$ be the digraph with vertex set $\{1,2,3,4,5\}$ and arc set $$\{1\leftrightarrow 2,1\leftrightarrow 3,2\leftrightarrow 3,1\leftrightarrow 4,1\leftrightarrow 5,4\leftrightarrow 5\},$$ whose diagram is the following.
	\begin{figure}[H]
		\centering
		\includegraphics[width=2in]{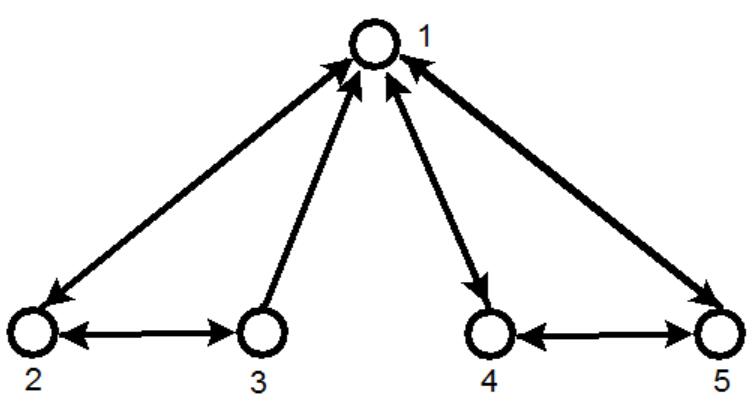}\hspace{0.8cm}
	\end{figure}
	
	\noindent It is easy to see that $D$ is $P_{2,2}$-free and it has 12 arcs, while $({n^2+4n-1})/{4}=11$ when $n=5$.

	\section{Proofs}

	In this section we give the proof of Theorem \ref{th1}. We need the following lemmas.
	
	\begin{lemma}\label{le1}
		Let $n\ge 13$ be a positive integer. Then $D_i$ is $P_{2,2}$-free  for $i=1,2,\ldots,10$ and
		\begin{equation}\label{eqh3.1}
		ex(n)\ge
		\begin{cases}
		\frac{n^2+4n-1}{4},&~if~ n ~is ~odd;\\
		\frac{n^2+4n}{4},&~if~ \frac{n}{2}~is~even;\\
		\frac{n^2+4n-4}{4},&\ if~\frac{n}{2}~is~odd.
		\end{cases}
		\end{equation}
	\end{lemma}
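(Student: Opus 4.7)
The plan is to verify the two assertions of the lemma in turn: first, that each $D_i$ is $P_{2,2}$-free; and second, that the arc counts of canonical representatives realize the lower bounds in (\ref{eqh3.1}). Both parts are routine direct verifications from the structural descriptions of the ten families.

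For the $P_{2,2}$-freeness, it suffices to show that for every ordered pair $(u,w)$ of distinct vertices in $D_i$, at most one intermediate vertex $v$ satisfies $u\to v\to w$. I would split according to which of $\V_1,\V_2$ contains $u$ and $w$. The three structural ingredients that do most of the work are: (a) $f$ is a matching, so each vertex of $\V_1$ participating in $f$ has a unique successor in $\V_2$ and each matched vertex of $\V_2$ has a unique predecessor in $\V_1$; (b) the arcs of $g$ go from $\V_2$ to $\V_1$, so any 2-path whose second arc is a $g$-arc must terminate in $\V_1$; and (c) the internal digraph $D(\V_1)$ is itself $P_{2,2}$-free, which follows because $S(y,z)$, $T(y_1,y_2)$, $S(y_1,z_1)\cup S(y_2,z_2)$, and disjoint 2-cycles each have the property that between any ordered pair of their vertices there is at most one directed 2-path. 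Combining (a)--(c), the case $u,w\in\V_1$ admits at most one intermediate in $\V_2$ (by (a)) and at most one intermediate in $\V_1$ (by (c)); the exclusion rules built into $g$ (for instance $u\to\V_1\setminus\{z\}$ for $u\in\V_4$ in $D_6,\ldots,D_9$) are designed precisely so that these two intermediates cannot coexist for the same pair $(u,w)$. The remaining three location cases for $(u,w)$ are handled analogously.

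For the arc count, pick a canonical representative in each parity class and sum the four contributions $e(D(\V_1))$, $e(D(\V_2))$, $e(\V_1,\V_2)$ and $e(\V_2,\V_1)$. In $D_1$ an $S(y,z)$ (or its disjoint union with some 2-cycles) always contributes exactly $|\V_1|$ arcs, $D(\V_2)$ is empty, the matching contributes $|\V_2|$, and $g$ contributes $|\V_1|\cdot|\V_2|$, giving $|\V_1|+|\V_2|+|\V_1|\cdot|\V_2|=(n^2+4n-1)/4$. An analogous computation for $D_3$, which accounts for the $(n/4-1)$ 2-cycles inside $\V_2$, the extra arc $z\to v$, and the shrunken out-neighbourhoods $\V_1\setminus\{z\}$ along the $g$-arcs from $\V_2\setminus\{v\}$, yields $(n^2+4n)/4$; and the same scheme applied to $D_4$ yields $(n^2+4n-4)/4$. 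The remaining digraphs in each parity class realize the same total because each deviation from this generic pattern is compensated by a corresponding deviation elsewhere in the definition.

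The main obstacle is bookkeeping across the ten families: the exceptional vertices $v$, $x$, $x_1$, $x_2$ in $D_3$ and $D_7,\ldots,D_{10}$ each modify both the in-arcs and the out-arcs by one unit, and one must check that these modifications were chosen precisely so as to preserve the arc total and simultaneously forbid a second 2-path through the exceptional vertex. The most delicate points are the alternative clause $x\to\V_1\setminus\{w\}$ in $D_8$ (one must check that the vertex $w$ with $w\to v$ indeed blocks the only possible second intermediate), the exclusion $x\to\V_1\setminus\{z,x_2^*\}$ in $D_9$ (chosen to handle both the intermediate $x_2^*$ and the threat through $z$), and the shifted matching in $D_7$ and $D_{10}$. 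Each of these must be verified individually; once cleared, both assertions of the lemma follow.
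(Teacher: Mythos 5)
Your proposal is correct and follows essentially the same route as the paper: a direct case analysis of where the vertices of a putative $P_{2,2}$ lie relative to the partition $\V_1\cup\V_2$ (the paper first rules out both intermediates lying in the same part, then cases on the endpoints), together with an explicit arc count of $D_1$, $D_3$ and $D_4$. The only caution is that your ingredient (a) is not literally true for $D_3$, $D_8$, $D_9$, where the vertex $v$ has two predecessors in $\V_1$; the paper treats this exceptional vertex explicitly, exactly as your final paragraph anticipates must be done.
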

	
	\begin{proof}
		Suppose $D\in \{D_1,D_2,\ldots,D_{10}\}$ contains $P_{2,2}$ as its subgraph. Then we have
		$$u_1\rightarrow u_2\rightarrow u_4{\rm~ and ~}u_1\rightarrow u_3\rightarrow u_4$$ with $u_1\ne u_4$ and $u_2\ne u_3$.

		Since each vertex of $D$ has at most one successor in $\V_2$, we have
		\begin{equation*}\label{eqhh0}
		\{u_2,u _3\}\nsubseteq\V_2.
		\end{equation*} If $D\in\{D_1,D_2,D_4,D_5,D_6,D_7, D_{10}\}$,  then each every vertex has at most one predecessor in $\V_1$. Hence, \begin{equation}\label{eqhh1}
		\{u_2,u _3\}\nsubseteq\V_1.
		\end{equation} If $D\in\{D_3,D_8,D_9\}$, then $v$ has two predecessors, say, $z,w \in\V_1$. Moreover, each vertex in $\V\setminus\{v\}$ has at most one predecessor in $\V_1$. Since $z$ and $w$ cannot be both the successors of any   vertex in  $\V\setminus\{v\}$,   we have    (\ref{eqhh1}).
		
		Without loss of generality, we assume $u_2\in \V_1$ and $u_3\in \V_2$. Now we distinguish four cases.
		
		{\it Case 1.} $u_1,u_4\in \V_1$.  Then each 2-path in $D(\V_1)$ originates at a vertex who has no successor in $\V_2$, which contradicts   $u_1\rightarrow u_3$ with $u_3\in \V_2$.

		{\it Case 2.}  $u_1\in \V_1$ and $u_4\in \V_2$. If $D\in\{D_1, D_4, D_5, D_{10}\}$, then  $D(\V_2)$ contains no arc, which contradicts   $u_3\rightarrow u_4$ with $u_3,u_4\in \V_2$. If $D\in\{D_2, D_3, D_6,D_7,D_8, D_9\}$, then only two vertices in $\V_1$ have successors in $\V_1$, which are $y$ and $z$. We have $y\rightarrow z$ and $z\rightarrow \V_1\backslash \{z\}$. Since $y$ has no successor in $\V_2$, we have $u_1=z$. If  $D=D_6$, $u_1$ has no successor in $\V_2$. In other cases, $u_1$'s successor in $\V_2$ has no successor in $\V_2$.

		{\it Case 3.}  $u_1\in \V_2$ and $u_4\in \V_1$. If $D\in\{D_1,D_4,D_5, D_{10}\} $, then $D(\V_2)$ contains no arcs, which contradicts   $u_1\rightarrow u_3$. For  other cases, if $u_1$ has a successor $u_2$ in $\V_2$, then we have $N^+_{\V_1}(u_1)\nrightarrow N^+_{\V_1}(u_2)$, which contradicts $u_1\rightarrow u_2\rightarrow u_4$ and $u_1\rightarrow u_3\rightarrow u_4$.

		{\it Case 4.}  $u_1,u_4\in \V_2$. If $D\in\{D_1,D_4,D_5, D_{10}\} $, then $D(\V_2)$ contains no arc, which  contradicts $u_1\rightarrow u_3$. If $D\in\{D_2, D_3, D_6, D_7,D_8\}$,  then $D(\V_2)$ is the disjoint union of 2-cycles and isolated vertices and it contains no 2-path, a contradiction. If  $D=D_9$, the only 2-path in $D(\V_2)$ is $x\rightarrow x_1\rightarrow x_2$. Since $x\nrightarrow x_2^*$ and $x_2^*$ is the unique predecessor of $x_2$ in $\V_1$, we also get a contradiction.
		
		In all the above cases we get contradictions. Hence $ D_1,D_2,\ldots,D_{10}$ are  $P_{2,2}$-free.  By directed computation, we obtain
		$$e(D_1)=\frac{n^2+4n-1}{4}, e(D_3)=\frac{n^2+4n}{4} {\rm~ and~}  e(D_4)=\frac{n^2+4n-4}{4}.$$ Therefore, we have (\ref{eqh3.1}).
		
	\end{proof}
	
	The following lemma is obvious.
	\begin{lemma}\label{le2}
			Let $D=(\V,\A)$ be a $P_{2,2}$-free digraph. Then
		\begin{itemize}
			\item[(i)]   two distinct successors of a vertex $v\in \V$ share no common successor in $\V\setminus\{v\}$;
			\item[(ii)]given any $v\in \V$,  $e(N^+(v),u)\le 1$ for all $u\in \V\backslash \{v\}$.
		\end{itemize}
	\end{lemma}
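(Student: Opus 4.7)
The plan is straightforward since both claims are immediate local consequences of the definition of $P_{2,2}$ as the digraph formed by two internally disjoint directed $2$-paths sharing the same initial vertex and the same terminal vertex. I would handle (i) directly by contradiction and then derive (ii) as a reformulation.

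For (i), I would suppose for contradiction that some vertex $v\in\V$ has two distinct successors $u_2\ne u_3$ which share a common successor $u_4\in\V\setminus\{v\}$. Then the four arcs $v\to u_2$, $v\to u_3$, $u_2\to u_4$, $u_3\to u_4$, combined with $v\ne u_4$ and $u_2\ne u_3$, exhibit exactly a copy of $P_{2,2}$ with initial vertex $v$ and terminal vertex $u_4$, contradicting the hypothesis that $D$ is $P_{2,2}$-free.

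For (ii), since $D$ is strict (no parallel arcs), the quantity $e(N^+(v),u)$ just counts the number of distinct vertices $w\in N^+(v)$ with $w\to u$. If this count were at least $2$, two such witnesses $w_1,w_2\in N^+(v)$ together with $v$ and $u$ would furnish two internally disjoint $2$-paths $v\to w_1\to u$ and $v\to w_2\to u$, i.e.\ the same forbidden $P_{2,2}$ ruled out in (i). Hence (ii) is equivalent to (i), with $u$ playing the role of the common successor $u_4$.

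No real obstacle is anticipated, which is consistent with the author's remark that the lemma is obvious; the content is simply a translation of the global $P_{2,2}$-free assumption into the local statement that, for each vertex $v$, the out-neighborhood $N^+(v)$ cannot twice cover any other vertex, and more generally that no two vertices of $N^+(v)$ can share an out-neighbor outside $\{v\}$.
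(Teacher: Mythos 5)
Your proof is correct and is exactly the argument the paper has in mind: the paper states this lemma without proof, calling it obvious, and your direct unpacking (two distinct successors of $v$ with a common successor $u_4\ne v$ yield the two $2$-paths $v\to u_2\to u_4$ and $v\to u_3\to u_4$ forming $P_{2,2}$, with (ii) being a restatement of (i)) is the intended verification. The implicit distinctness checks ($u_2\ne u_4$, etc.) all follow from looplessness, so there is no gap.
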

	
			The {\it outdegree}  and {\it indegree} of a vertex $u$, denoted by $d^+(u)$  and $d^-(u)$, is the number of arcs with tails and heads $u$, respectively.
	We use the letter $k$ to denote the maximum outdegree of $D$, i.e.,
	$$k=\max_{u\in\V}d^+(u).$$
	
	Given a vertex $u\in \V$, we  always use $\V_1(u)$ and $\V_2(u)$ to denote $N^+(u)$ and $\V\backslash \V_1(u)$, respectively.
	We  also denote by
	$\tau(u)$ the number of vertices in $D$ which are both successors and  predecessors of $u$, i.e., $$\tau(u)=|N^+(u)\cap N^-(u)|=e(N^+(u),u)=e(\V_1(u),u).$$  It is obvious that $$\tau(u)\le k {\rm ~for~all~} u\in \V.$$
	The index $u$ in $\V_1(u), \V_2(u)$ and $\tau(u)$ will be omitted if no confusion arises.
	
	\begin{lemma}\label{le7}
		Let $D=(\V,\A)$ be a digraph with a vertex $u$ such that  $d^+(u)\ge 2$. Suppose $\{v_1,v_2\}\subseteq N^+(u)$ and $S\subseteq \V$.  If $e(v_1,S)+e(v_2,S)\ge |S|+2$, then $D$ is not $P_{2,2}$-free.
	\end{lemma}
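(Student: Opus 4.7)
The plan is to use a short double-counting argument. Set $A_1 := N^+(v_1)\cap S$ and $A_2 := N^+(v_2)\cap S$, so that $|A_1| + |A_2| = e(v_1,S) + e(v_2,S) \ge |S|+2$. Since $A_1\cup A_2 \subseteq S$, inclusion-exclusion gives
$$|A_1\cap A_2| \;=\; |A_1|+|A_2|-|A_1\cup A_2| \;\ge\; (|S|+2) - |S| \;=\; 2.$$
So $v_1$ and $v_2$ share at least two common successors inside $S$.

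Next I would extract a suitable common successor $w$ that produces a genuine copy of $P_{2,2}$. Since $|A_1\cap A_2|\ge 2$ and at most one of these common successors can equal $u$, I can choose $w \in (A_1\cap A_2)\setminus\{u\}$. Then by construction $u\rightarrow v_1 \rightarrow w$ and $u\rightarrow v_2\rightarrow w$, so these two 2-paths have the same initial vertex $u$ and the same terminal vertex $w$.

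It only remains to verify that the four vertices $u, v_1, v_2, w$ are pairwise distinct, which is where I would carefully invoke the strictness hypothesis from the introduction (no loops). Distinctness of $v_1$ and $v_2$ is given; $u\ne v_i$ because $v_i \in N^+(u)$ and loops are forbidden; $w\ne v_i$ because $w\in N^+(v_i)$ and loops are forbidden; finally $w\ne u$ by our choice. Hence $\{u,v_1,w,v_2\}$ induces (at least) a copy of $P_{2,2}$, contradicting the assumption that $D$ is $P_{2,2}$-free.

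The argument is essentially bookkeeping, so there is no real obstacle; the only subtlety worth flagging is the ``$+2$'' in the hypothesis, which is precisely what guarantees both $|A_1\cap A_2|\ge 2$ and thus the existence of a common successor distinct from $u$. A weaker bound $e(v_1,S)+e(v_2,S)\ge |S|+1$ would only yield $|A_1\cap A_2|\ge 1$, which could be absorbed by the possibility $w = u$ and would not suffice.
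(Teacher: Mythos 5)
Your proof is correct and follows essentially the same route as the paper: the inequality forces $v_1$ and $v_2$ to share at least two common successors in $S$, so one of them can be chosen different from $u$, producing the forbidden pair of 2-paths. The only cosmetic difference is that the paper delegates the final step to Lemma~\ref{le2}(i), while you verify the distinctness of the four vertices directly; your explicit remark that the ``$+2$'' (rather than ``$+1$'') is what rules out the common successor being $u$ is exactly the right point to flag.
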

	
	\begin{proof}
		The inequality guarantees that $v_1$ and $v_2$ share at least two common successors. Applying Lemma \ref{le2},  $D$ is not $P_{2,2}$-free.
		
	\end{proof}

	\begin{lemma}\label{le9}
		Let $D=(\V,\A)$ be  a $P_{2,2}$-free  digraph and $v\in \V$.  Then  each  $u\in \V_2(v)\backslash \{v\}$ shares at most $d^+(v)-\tau(v)+1$ common successors with $v$.
	\end{lemma}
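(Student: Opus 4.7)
The plan is to reduce the lemma to the assertion that at most one element of $N^+(v)\cap N^-(v)$ can also lie in $N^+(u)$, and then to exhibit a forbidden $P_{2,2}$ whenever two such elements exist. Put $W=N^+(v)\cap N^+(u)$ (the set of common successors I want to count) and $R=N^+(v)\cap N^-(v)$, so that $|R|=\tau(v)$. Both $W$ and $R$ are subsets of $N^+(v)$, so inclusion--exclusion yields
\[
|W|+\tau(v)=|W\cup R|+|W\cap R|\le d^+(v)+|W\cap R|.
\]
Consequently the conclusion $|W|\le d^+(v)-\tau(v)+1$ is equivalent to the single inequality $|W\cap R|\le 1$.

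To prove this key inequality I would argue by contradiction. Suppose $w_1,w_2\in W\cap R$ are distinct. From $w_i\in W$ I obtain $u\to w_i$, and from $w_i\in R$ I obtain $w_i\to v$; hence $u\to w_1\to v$ and $u\to w_2\to v$ are two 2-paths with common initial vertex $u$ and common terminal vertex $v$. I would then check that $u,v,w_1,w_2$ are pairwise distinct: $u\ne v$ holds since $u\in \V_2(v)\setminus\{v\}$; $w_i\ne v$ because $D$ is loopless and $w_i\in N^+(v)$; and $w_i\ne u$ because $w_i\in N^+(v)$ while the assumption $u\in\V_2(v)=\V\setminus N^+(v)$ forces $u\notin N^+(v)$. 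Thus $\{u,w_1,v,w_2\}$ supports a copy of $P_{2,2}$, contradicting the hypothesis that $D$ is $P_{2,2}$-free.

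The only delicate point in the whole argument is this distinctness check, and it is exactly where the hypothesis $u\in\V_2(v)\setminus\{v\}$ (rather than just $u\ne v$) is used in an essential way: if $u$ were allowed to lie in $N^+(v)$, it could coincide with one of the $w_i$, and the putative $P_{2,2}$ would collapse. Everything else is bookkeeping with $N^+(v)$, so I do not anticipate any additional obstacle.
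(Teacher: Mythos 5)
Your proof is correct and follows essentially the same route as the paper: both arguments show that having more than $d^+(v)-\tau(v)+1$ common successors forces at least two of them into $N^+(v)\cap N^-(v)$, and then exhibit the forbidden configuration $u\rightarrow w_1\rightarrow v$, $u\rightarrow w_2\rightarrow v$. Your explicit inclusion--exclusion bookkeeping and the verification that $u,v,w_1,w_2$ are pairwise distinct are details the paper leaves implicit, but they do not change the argument.
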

	
	\begin{proof} Assume there exists a vertex $u\in V_2(v)\backslash \{v\}$ sharing $d^+(v)-\tau(v)+2$ common successors with $v$. By the definition of $\tau(v)$, there are at least two  successors $v_1,v_2$ of  $u$ belonging to $N^+(u)\cap N^-(v)$. So there are two paths
		$$u\rightarrow v_1\rightarrow v {\rm~ and~ }u\rightarrow v_2\rightarrow v.$$ Hence, $D$ is not $P_{2,2}$-free, a contradiction.
		
	\end{proof}

	Given $v\in \V$, let $\alpha(v)=\max\limits_{u\in \V}e(u,\V_2(v))$. For convenience, we simply write $\alpha$ if $v$ is clear. We   have the following upper bound on $\alpha$.
	
	\begin{lemma}\label{le12}
		
		Let $D=(\V,\A)\in EX(n)$ with $n\ge 13$, and let $v\in \V$  such that $d^+(v)=k$. Then
		 $$\alpha(v)\le 1{~\rm and~}\tau(v)\le 2.$$
		
	\end{lemma}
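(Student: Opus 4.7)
The plan is to establish both inequalities by a global arc-counting argument, confronting an upper bound on $|\A|$ with the extremality hypothesis $|\A|\ge ex(n)$. Write $\V_1=N^+(v)$ and $\V_2=\V\setminus\V_1$, so $|\V_1|=k$, $|\V_2|=n-k$, and $v\in\V_2$. I assemble three estimates. By Lemma~\ref{le2}(ii) applied at $v$, every $w\ne v$ has at most one predecessor in $\V_1$, whence $e(\V_1,\V)\le(n-1)+\tau$. By Lemma~\ref{le9} applied at $v$, each $u\in\V_2\setminus\{v\}$ satisfies $e(u,\V_1)\le k-\tau+1$, which, combined with $e(v,\V_1)=k$, yields $e(\V_2,\V_1)\le k+(n-k-1)(k-\tau+1)$. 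By the definition of $\alpha$ and the trivial $e(v,\V_2)=0$, $e(\V_2,\V_2)\le(n-k-1)\alpha$. Summing produces a master inequality $|\A|\le\Phi(k,\tau,\alpha)$ with $\Phi$ an explicit affine polynomial.

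Next I pin down $k$: Lemma~\ref{le7} prevents $k$ from exceeding $\lfloor n/2\rfloor+1$ (two successors of $v$ would have summed out-degrees at most $n+1$), and substituting the trivial bounds $\tau\le k$ and $\alpha\le n-k-1$ into $\Phi$ shows that smaller $k$ already gives $|\A|<ex(n)$. With $k=\lfloor n/2\rfloor+1$ fixed and $|\A|\ge ex(n)$, the master inequality simplifies by routine algebra to a constraint of the form $(n-c_1)(\tau-1)\le(n-c_2)\alpha$ with small constants $c_1,c_2$ (the precise form depends on the parity of $n$ and of $n/2$). For $n\ge 13$ this immediately yields $\tau\le 2$ as soon as $\alpha\le 1$.

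To rule out $\alpha\ge 2$, suppose some $u$ satisfies $e(u,\V_2)=\alpha\ge 2$. Then $u\ne v$, and the bare out-degree bound $d^+(u)\le k$ forces $e(u,\V_1)\le k-\alpha\le k-2$, strictly sharper than Lemma~\ref{le9}'s bound $k-\tau+1$ once $\tau\le 2$. Replacing the corresponding term in the estimate of $e(\V_2,\V_1)$ saves at least $\alpha-1\ge 1$ extra arcs in $\Phi$, which for $n\ge 13$ pushes the right side of the master inequality strictly below $ex(n)$, contradicting extremality; hence $\alpha\le 1$, and then $\tau\le 2$ follows. The main obstacle is that $\Phi$ is \emph{increasing} in $\alpha$, so the master inequality alone cannot bound $\alpha$: the extra saving must be extracted by combining Lemma~\ref{le9} with the plain out-degree bound at a single well-chosen vertex, and checking that this saving is numerically sufficient across all three parity regimes of $n$ (and rigorously pinning down $k=\lfloor n/2\rfloor+1$) is precisely where the hypothesis $n\ge 13$ is used.
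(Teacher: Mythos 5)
Your reduction of $\tau\le 2$ to $\alpha\le 1$ via the master inequality is essentially a repackaging of the paper's inequality $\tau\le k-\beta+\alpha_2+1$ plus arc counting, and that part would work; but the proposal breaks down at exactly the two points you flag as delicate. First, $k$ is not pinned down to $\lfloor n/2\rfloor+1$ by what you cite: Lemma~\ref{le7} with $S=\V$ bounds $d^+(v_1)+d^+(v_2)$ for two \emph{successors} $v_1,v_2$ of a common vertex, which says nothing about $d^+(v)=k$ itself, and substituting the trivial bounds $\tau\le k$, $\alpha\le\min(k,n-k)$ into $\Phi$ only confines $k$ to an interval of width on the order of $\sqrt{n}$ around $n/2$ (the paper proves $n/2\le k\le n/2+2$ only in Lemma~\ref{le13}, which \emph{uses} Lemma~\ref{le12}). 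The paper sidesteps this entirely by completing the square in $k$ and showing its upper bounds fall below $ex(n)$ for every $k$ simultaneously; your argument needs the exact value of $k$ and does not obtain it.

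Second, and more seriously, the step $\alpha\le 1$ does not close. (i) It is circular: the claimed improvement $e(u,\V_1)\le k-\alpha<k-\tau+1$ requires $\alpha\ge\tau$, i.e.\ it presupposes $\tau$ small, while $\tau\le 2$ is only to be derived afterwards from $\alpha\le1$. (ii) It is quantitatively insufficient: the term $(n-k-1)\alpha$ in $\Phi$ grows by roughly $n/2$ per unit of $\alpha$, whereas the saving extracted at the single witness vertex is only about $\alpha-1$ arcs; with $k=(n+1)/2$, $\alpha=2$, $\tau=1$ the slack in the master inequality is about $n-3$, and saving one arc leaves it wide open. The paper instead bounds $e(\V_2,\V)$ by $\sum_{u\in\V_2}d^+(u)\le(n-k)k$ and wins a saving of order $n$ from the structure around the high-$\alpha_2$ vertex: its $\alpha_2\ge4$ successors in $\V_2$ pairwise share no common successor outside it, so their out-degrees sum to at most $n+3$ rather than $4k\approx 2n$, with separate (similarly large) savings for $\alpha_2\in\{2,3\}$ after first establishing $\beta=k$. (iii) Your saving argument applies only when the witness $u$ lies in $\V_2$; if $u\in\V_1$ (the case $\alpha_1\ge2$), the bound $e(\V_1,\V)\le(n-1)+\tau$ is insensitive to $\alpha_1$ and no term of $\Phi$ improves, so that case is simply not addressed. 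The paper handles it separately via $e(u_1,\V_1)+e(u_2,\V_1)\le k+1$ for two successors $u_1,u_2\in\V_2$ of $u$, again yielding a saving of order $k$ rather than $1$.
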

	
	\begin{proof}
		
		Denote by
		$$\tau=\tau(v), \alpha_1=\max\limits_{u\in \V_1}e(u,\V_2), \alpha_2=\max\limits_{u\in \V_2}e(u,\V_2), {\rm ~and ~}\beta=\max\limits_{u\in \V_2\backslash \{v\}}d^+(u). $$
		Then  $\alpha=\max\{\alpha_1,\alpha_2\}$ and
		\begin{equation}\label{equ:3.1}
		e(D)= e(\V_2,\V)+e(\V_1,\V)=\sum\limits_{u\in \V_2}d^+(u)+\sum\limits_{u\in \V}e(\V_1,u).
		\end{equation}
		Moreover,
		\begin{equation}\label{equ:3.3}
		\tau\le k-\beta+\alpha_2+1.
		\end{equation}
		In fact, if $\beta\le \alpha_2+1$, (\ref{equ:3.3}) holds trivially. If $\beta> \alpha_2+1$, suppose  $u\in \V_2\setminus\{v\}$ such that $d^+(u)=\beta$. Then $u$ has at least $\beta-\alpha_2$ successors in $\V_1$. Applying Lemma \ref{le9} we get $$\beta-\alpha_2\le k-\tau+1,$$ which is equivalent with (\ref{equ:3.3}).

		Firstly we prove $\alpha_2\le 1$.
		If $\alpha_2\ge 4$, there exists $u_0\in \V_2$ such that $u_0\rightarrow u_i$, where $u_i\in \V_2$ for $i=1,2,3,4$.
		Since $u_i$ ($i=1,2,3,4$) shares no common successor in  $\V\backslash \{u_0\}$ with each other, we have
		$$ \sum\limits_{i=1}^{4}d^+(u_i)\le n+3.$$
		Moreover, by Lemma \ref{le2} we have
		$$e(\V_1,u)\le 1 {\rm ~ for~ all~} u\in \V\setminus\{v\}. $$Hence
		\begin{eqnarray*}
			e(D)&=& \sum\limits_{u\in \V_2\backslash\{u_1,\dots,u_4\}}d^+(u)+\sum\limits_{i=1}^{4}d^+(u_i)+\sum\limits_{u\in \V\backslash \{v\}}e(\V_1,u)+e(\V_1,v)\\
			&\le& (n-k-4)k+n+3+n-1+\tau\\
			&\le& (n-k-4)k+n+3+n-1+k\\
			&=&-(k-\frac{n-3}{2})^2+\frac{n^2+2n+17}{4}\\
			&<& \frac{n^2+4n-4}{4}.
		\end{eqnarray*}
		It follows from (\ref{eqh3.1}) that $e(D)<ex(n)$, which contradicts $D\in EX(n)$. Thus, we have $\alpha_2\le 3$.

		Now we assert that $\beta=k$.
		Otherwise, if $\beta\le k-2$, then
		\begin{eqnarray*}
			e(D)&= &\sum\limits_{u\in \V_2\backslash\{v\}}d^+(u)+d^+(v)+\sum\limits_{u\in \V\backslash \{v\}}e(\V_1,u)+e(\V_1,v)\\
			&\le&  (n-k-1)(k-2)+k+n-1+\tau\\
			&\le&  (n-k-1)(k-2)+k+n-1+k\\
			&=& -(k-\frac{n+3}{2})^2+\frac{n^2+2n+13}{4}\\
			&<&\frac{n^2+4n-4}{4};
		\end{eqnarray*}
		if $\beta=k-1$, then by (\ref{equ:3.3})  we have $\tau\le 5$ and
		\begin{eqnarray*}
			e(D)&=& \sum\limits_{u\in \V_2\backslash\{v\}}d^+(u)+d^+(v)+\sum\limits_{u\in \V\backslash \{v\}}e(\V_1,u)+e(\V_1,v)\\
			&\le& (n-k-1)(k-1)+k+n-1+5\\
			&=&-(k-\frac{n+1}{2})^2+\frac{n^2+2n+21}{4}\\
			&<& \frac{n^2+4n-4}{4}.
		\end{eqnarray*}
		In both cases we get $e(D)<ex(n)$, which contradicts $D\in EX(n)$.    Hence, $\beta=k$.

		Now suppose $\alpha_2=2$ or 3. Then by (\ref{equ:3.3}) we have $\tau\le 4$.
		Let $w\in \V_2$ such that  $$N^+_{\V_2}(w)=\{u_1,\ldots,u_{\alpha_2}\}.$$
		If $v\notin N^+_{\V_2}(w)$, then Lemma \ref{le2} ensures that no pair of  vertices in $N^+_{\V_2}(w)$ shares a common successor in $\V_1\backslash N^-_{\V_1}(v)$ and each $u_i$ has at most one successor in $N^-_{\V_1}(v)$ for $1\le i\le \alpha_2$, which implies $$\sum\limits_{i=1}^{\alpha_2}e(u_i,\V_1)=\sum\limits_{i=1}^{\alpha_2}e(u_i,\V_1\backslash N^-_{\V_1}(v))+\sum\limits_{i=1}^{\alpha_2}e(u_i,N^-_{\V_1}(v))\le k-\tau+\alpha_2.$$ Therefore,
		\begin{eqnarray}\label{equ:3.2}
		\sum\limits_{i=1}^{\alpha_2}d^+(u_i)=\sum\limits_{i=1}^{\alpha_2}e(u_i,\V_1)+\sum\limits_{i=1}^{\alpha_2}e(u_i,\V_2)\le k-\tau+\alpha_2+(\alpha_2)^2.
		\end{eqnarray}
		By (\ref{equ:3.1}), we obtain
		\begin{eqnarray*}
			e(D)&=&\sum\limits_{u\in \V_2\backslash \{u_1,\ldots,u_{\alpha_2}\}}d^+(u)+\sum\limits_{u\in \{u_1,\dots,u_{\alpha_2}\}}d^+(u)+\sum\limits_{u\in \V}e(\V_1,u)\\
			&\le& k(n-k-\alpha_2)+k-\tau+\alpha_2+(\alpha_2)^2+\tau+n-1\equiv f_1.
		\end{eqnarray*}
			If $v\in N^+_{\V_2}(w)$, say, $v=u_1$. By Lemma \ref{le2},   $\sum\limits_{u\in \{u_2,\dots,u_{\alpha_2}\}}e(u,\V_1)=0$. Note $v$ has no successor in $\V_2$. We have
		\begin{eqnarray}
		\sum\limits_{i=1}^{\alpha_2}d^+(u_i)=\sum\limits_{i=1}^{\alpha_2}e(u_i,\V_1)+\sum\limits_{i=1}^{\alpha_2}e(u_i,\V_2)\le k+(\alpha_2-1)\alpha_2.
		\end{eqnarray}
		By  (\ref{equ:3.1}), we obtain
		\begin{eqnarray*}
			e(D)&=&\sum\limits_{u\in \V_2\backslash \{u_1,\ldots,u_{\alpha_2}\}}d^+(u)+\sum\limits_{u\in \{u_1,\dots,u_{\alpha_2}\}}d^+(u)+\sum\limits_{u\in \V}e(\V_1,u)\\
			&\le& k(n-k-\alpha_2)+k-\alpha_2+(\alpha_2)^2+n-1+\tau\equiv f_2.
		\end{eqnarray*}
	We can verify that
	$$f_i<ex(n) ~{\rm for~}  i=1,2 {\rm ~and~ }\alpha_2=2,3,$$  which  contradicts $D\in EX(n)$.
		Hence,   $$\alpha_2\le 1~{\rm and}~\tau\le 2.$$

		Next we show that $\alpha_1\le 1$. Otherwise, suppose there exists $u\in \V_1$ such that $\V_2\cap N^+(u) $ has two distinct vertices $u_1,u_2$. By Lemma \ref{le2}, we have $$\sum\limits_{i=1}^{2}e(u_i,\V_1)\le k+1.$$ Since $\alpha_2\le 1$, we obtain $$e(u_1,\V_2)+e(u_2,\V_2)\le 2$$ and $$d^+(u_1)+d^+(u_2)\le k+3.$$ Again, from (\ref{equ:3.1})  we have
		\begin{eqnarray*}
			e(D)&=& \sum\limits_{u\in \V_2\backslash\{u_1,u_2\}}d^+(u)+\sum\limits_{i=1}^{2}d^+(u_i)+\sum\limits_{u\in\V\backslash \{v\}}e(\V_1,u)+e(\V_1,v)\\
			&\le&(n-k-2)k+k+3+n-1+\tau\\
			&<&ex(n),
		\end{eqnarray*}
		a contradiction.
		
		Therefore, $\alpha=\max\{\alpha_1,\alpha_2\}\le 1$. This completes the proof.
	\end{proof}

	\begin{lemma}\label{le13}
		Let $D=(\V,\A)\in EX(n)$ with $n\ge 13$. Then
	\begin{equation}\label{eqhh}
		\frac{n}{2}\le k\le \frac{n}{2}+2.
	\end{equation}
	\end{lemma}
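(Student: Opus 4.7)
The plan is to combine the structural restrictions from Lemmas \ref{le2}(ii) and \ref{le12} into a single upper bound on $e(D)$ that is quadratic in $k$, and then extract the range of $k$ by comparing with the lower bound on $e(D)$ from Lemma \ref{le1}. First I would fix a vertex $v\in\V$ with $d^+(v)=k$ and adopt the usual notation $\V_1=N^+(v)$, $\V_2=\V\setminus \V_1$, so that $|\V_1|=k$, $|\V_2|=n-k$, and $v\in \V_2$.

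The key observation is that the arcs leaving $\V_1$ are doubly restricted. On the one hand, Lemma \ref{le2}(ii) gives $e(\V_1,w)\le 1$ for every $w\in \V_1\subseteq \V\setminus\{v\}$, so summing over $w\in \V_1$ yields $e(\V_1,\V_1)\le k$. On the other hand, Lemma \ref{le12} gives $\alpha(v)\le 1$, which says every $u\in \V_1$ has at most one successor in $\V_2$, whence $e(\V_1,\V_2)\le k$. Adding these gives $e(\V_1,\V)\le 2k$. Combined with the trivial bound $e(\V_2,\V)\le (n-k)k$ coming from $d^+(u)\le k$, I obtain
\[
e(D)=e(\V_1,\V)+e(\V_2,\V)\le 2k+(n-k)k=k(n-k+2).
\]

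Finally, feeding $e(D)\ge ex(n)$ from Lemma \ref{le1} into this will produce the quadratic inequality $k^{2}-(n+2)k+ex(n)\le 0$, whose discriminant $\Delta=(n+2)^{2}-4\,ex(n)$ equals $5$ when $n$ is odd, $4$ when $n/2$ is even, and $8$ when $n/2$ is odd. In each case the real solution set is an interval centered at $n/2+1$ of half-width $\sqrt{\Delta}/2$. The only step that demands genuine care, and the part I would handle slowly, is passing from this real interval to the integer statement $n/2\le k\le n/2+2$: the two even subcases are immediate because $n/2\in\mathbb Z$ and $\sqrt{\Delta}/2\in\{1,\sqrt{2}\}$, while the odd subcase uses that $n/2$ is a half-integer and $\sqrt{5}/2\in(1,3/2)$ to force $(n+1)/2\le k\le (n+3)/2$. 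In every situation the conclusion fits inside $[n/2,n/2+2]$.
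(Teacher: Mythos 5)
Your proposal is correct and follows essentially the same route as the paper: the same decomposition $e(D)=e(\V_1,\V_1)+e(\V_1,\V_2)+e(\V_2,\V)\le k+k+(n-k)k$ via Lemma \ref{le2}(ii) and Lemma \ref{le12}, compared against the lower bound from Lemma \ref{le1}. The only (harmless) difference is cosmetic: you solve the resulting quadratic inequality via its discriminant, while the paper simply checks that $k(n-k+2)<ex(n)$ for $k<n/2$ and $k>n/2+2$.
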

	
	\begin{proof}
		
		Let  $v$ be a vertex of $D$ such that $d^+(v)=k$.  Then $$e(\V_2,\V)=\sum\limits_{u\in \V_2}e(u,\V)\le (n-k)k.$$  Applying Lemma \ref{le2} and Lemma \ref{le12} we have
		$$e(\V_1)=\sum\limits_{u\in \V_1}e(\V_1,u)\le k$$
		and
		$$e(u,\V_2)\le 1~ {\rm for ~all} ~ u\in \V_1,$$ which implies $e(\V_1,\V_2)\le k$. It follows that
		\begin{eqnarray*}
			e(D)&=&e(\V_2,\V)+e(\V_1,\V_2)+e(\V_1)\\
			&<&(n-k)k+2k,
		\end{eqnarray*}
		which is less than  $ex(n)$ when $k<n/2$ or $k>n/2+2$. Hence, we get (\ref{eqhh}).

	\end{proof}

	Let  $D$ be a digraph with maximum outdegree $k$, let $v$ be a vertex in $D$ such that $d^+(v)=k$, and let $u\in \V_2(v)$. If $d^+_{\V_1(v)}(u)=k-1$, then we denoted by $u'$ the unique vertex of $\V_1(v)\backslash N^+_{\V_1(v )}(u)$.
	
	\begin{lemma}\label{le10}
		
		Let $D\in EX(n)$ and $v$ be a vertex such that $d^+(v)=k$. If $u_1,u_2\in \V_2(v)$ and $u_1\rightarrow u_2$, then $$N^+_{\V_1(v)}(u_1)\nrightarrow N^+_{\V_1(v)}(u_2).$$ Moreover, if $\V_1(v)\rightarrow \V_1(v)$ and $d^+(u_1)=d^+(u_2)=k$, then $$N^+_{\V_1(v)}(u_1)=N^+_{\V_1(v)}(u_2)=\V_1(v)\backslash \{u_1'\}{\rm~ and~} u_1'\rightarrow \V_1(v)\backslash \{u_1'\}.$$
		
	\end{lemma}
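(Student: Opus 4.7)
I will prove the two assertions separately. The first is a short direct $P_{2,2}$-argument, while the second requires structural analysis and, in one sub-case, a careful edge count.

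For the first assertion, suppose for contradiction that there exist $w_1\in N^+_{\V_1(v)}(u_1)$ and $w_2\in N^+_{\V_1(v)}(u_2)$ with $w_1\to w_2$. Then $u_1\to w_1\to w_2$ and $u_1\to u_2\to w_2$ are two 2-paths sharing initial vertex $u_1$ and terminal vertex $w_2$; the partition $\V=\V_1(v)\cup \V_2(v)$ forces the internal vertices $w_1,u_2$ to be distinct (they lie in different blocks) and likewise $u_1\ne w_2$, producing a copy of $P_{2,2}$ in $D$ and contradicting the hypothesis.

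For the second assertion, Lemma \ref{le12} gives $\alpha(v)\le 1$, so $u_1$ has a unique successor in $\V_2(v)$. Since that successor must be $u_2$, I obtain $d^+_{\V_1(v)}(u_1)=k-1$, so $u_1'$ is well defined and $N^+_{\V_1(v)}(u_1)=\V_1(v)\backslash\{u_1'\}$. The hypothesis $\V_1(v)\to\V_1(v)$ supplies $x\in \V_1(v)$ with $x\to y$ for every $y\in \V_1(v)\backslash\{x\}$. If $x\ne u_1'$, then $x\in N^+_{\V_1(v)}(u_1)$; since $\alpha(v)\le 1$ forces $|N^+_{\V_1(v)}(u_2)|\ge k-1\ge 2$ (with $k\ge n/2\ge 7$ from Lemma \ref{le13}), I can pick $y\in N^+_{\V_1(v)}(u_2)\backslash\{x\}$, and the arc $x\to y$ then contradicts the first assertion. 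Hence $x=u_1'$, which is precisely $u_1'\to\V_1(v)\backslash\{u_1'\}$.

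Finally, since $|N^+_{\V_1(v)}(u_2)|\ge k-1=|\V_1(v)\backslash\{u_1'\}|$, proving $N^+_{\V_1(v)}(u_2)=\V_1(v)\backslash\{u_1'\}$ reduces to showing $u_2\nrightarrow u_1'$. Assuming $u_2\to u_1'$, I would split on $u_2$'s at-most-one successor in $\V_2(v)$. If $u_2\to v$, applying the first assertion to $u_2\to v$ yields $N^+_{\V_1(v)}(u_2)\nrightarrow N^+_{\V_1(v)}(v)=\V_1(v)$, which contradicts $u_1'\in N^+_{\V_1(v)}(u_2)$ together with $u_1'\to \V_1(v)\backslash\{u_1'\}$. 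If $u_2\to u_3$ with $u_3\in \V_2(v)\backslash\{v\}$, the first assertion forces $N^+_{\V_1(v)}(u_3)\subseteq\{u_1'\}$ and hence $d^+(u_3)\le 2$; combined with $d^+(z)\le 1$ for each $z\in \V_1(v)\backslash\{u_1'\}$ (from the first assertion and $\alpha(v)\le 1$) and with Lemma \ref{le13}, an edge count gives $e(D)<ex(n)$, violating $D\in EX(n)$. If instead $u_2$ has no successor in $\V_2(v)$, then $N^+_{\V_1(v)}(u_2)=\V_1(v)$, which via the first assertion deprives every $z\in \V_1(v)\backslash\{u_1'\}$ of successors in $\V_1(v)$; Lemma \ref{le2}(i) applied to $u_1'$ then forces the $\V_2(v)$-successors of these $z$'s to be pairwise distinct, sharpening the edge count enough to again violate extremality. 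The main obstacle I expect is this last step, where local $P_{2,2}$-freeness alone leaves a tight margin and extremality must be combined with $\alpha(v)\le 1$ and the distinctness from Lemma \ref{le2}(i) to close the gap.
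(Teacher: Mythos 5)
Your first assertion is proved exactly as in the paper (two $2$-paths $u_1\to u_2\to w_2$ and $u_1\to w_1\to w_2$), and your opening step of the second assertion ($\alpha(v)\le 1$ forces $d^+_{\V_1(v)}(u_1)=k-1$, so $N^+_{\V_1(v)}(u_1)=\V_1(v)\setminus\{u_1'\}$) also matches. The divergence, and the gap, comes from how you read the hypothesis $\V_1(v)\rightarrow\V_1(v)$. You take it to mean ``there is a single vertex $x$ with $x\to y$ for all $y\in\V_1(v)\setminus\{x\}$.'' The paper uses it to mean ``every vertex of $\V_1(v)$ has a predecessor in $\V_1(v)$'' — this is clear from where the lemma is invoked in the main proof, where the hypothesis is established from $e(\V_1,u)=1$ for all $u$, which gives no dominating vertex. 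Under the intended reading the whole second assertion is three lines: each of the at least $k-1$ vertices of $N^+_{\V_1(v)}(u_2)$ has a predecessor in $\V_1(v)$, which by the first assertion cannot lie in $N^+_{\V_1(v)}(u_1)=\V_1(v)\setminus\{u_1'\}$, hence must be $u_1'$; so $u_1'\to N^+_{\V_1(v)}(u_2)$, and since loops are forbidden, $u_1'\notin N^+_{\V_1(v)}(u_2)$, forcing $N^+_{\V_1(v)}(u_2)=\V_1(v)\setminus\{u_1'\}$ and $u_1'\to\V_1(v)\setminus\{u_1'\}$ simultaneously. (Your own final case analysis would also collapse under this reading: if $u_2\to u_1'$, then the predecessor of $u_1'$ in $\V_1(v)$ lies in $N^+_{\V_1(v)}(u_1)$ and points into $N^+_{\V_1(v)}(u_2)$, contradicting the first assertion.)

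Under your reading the argument does not close. The sub-cases $u_2\to v$ and $u_2\to u_3\in\V_2(v)\setminus\{v\}$ are fine, but the last sub-case ($u_2\to u_1'$ and $u_2$ has no successor in $\V_2(v)$, so $N^+_{\V_1(v)}(u_2)=\V_1(v)$) is not killed by the edge count you sketch. There you get $e(\V_2(v),\V)\le k(n-k)$ and $e(\V_1(v),\V)\le (k-1)+k=2k-1$ (each $z\in\V_1(v)\setminus\{u_1'\}$ has no successor in $\V_1(v)$ and at most one in $\V_2(v)$), for a total of $k(n-k)+2k-1$; with $n$ odd and $k=(n+1)/2$ this equals $\frac{n^2+4n-1}{4}=ex(n)$ exactly, so extremality is not violated. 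The observation that Lemma \ref{le2}(i) makes the $\V_2(v)$-successors of the $z$'s pairwise distinct does not improve the count, since each $z$ was already charged at most one such arc. Closing this case would require a genuine equality analysis that you have not supplied — and even then, the lemma proved under your reading of the hypothesis could not be applied at the points in the main proof where it is needed.
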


	\begin{proof}
		
		Suppose there exist $u_3\in N^+_{\V_1(v)}(u_1)$ and $u_4\in N^+_{\V_1(v)}(u_2)$ such that $u_3\rightarrow u_4$. Then we have $$u_1\rightarrow u_2\rightarrow u_4{\rm~ and~} u_1\rightarrow u_3\rightarrow u_4,$$a contradiction with $D\in EX(n)$. Hence, $N^+_{\V_1(v)}(u_1)\nrightarrow N^+_{\V_1(v)}(u_2)$.

		For  the second part,
		since $d^+(u_1)=d^+(u_2)=k$, by  Lemma \ref{le12} we have $$d^+_{\V_2}(u_i)\le 1{\rm~  and ~}d^+_{\V_1}(u_i)\ge k-1{\rm ~for~} i=1,2.$$
		Now  $\V_1(v)\rightarrow \V_1(v)$ and $N^+_{\V_1(v)}(u_1)\nrightarrow N^+_{\V_1(v)}(u_2)$ imply $$d^+_{\V_1}(u_1)=d^+_{\V_1}(u_2)=k-1~~{\rm and}~~u_1'\rightarrow N^+_{\V_1(v)}(u_2) .$$
		Since $D$ is loopless, we have $u_1'\notin N^+_{\V_1(v)}(u_2)$. Hence $u_1'=u_2'$    and $$N^+_{\V_1(v)}(u_1)=N^+_{\V_1(v)}(u_2)=\V_1(v)\backslash \{u_1'\}.$$
	\end{proof}

	Now we are ready to present the proof of Theorem \ref{th1}.\\
	
	{\bf Proof of Theorem \ref{th1}.}
	Let $D=(\V,\A)\in EX(n)$.
	Note that a digraph is in $EX(n)$ if and only if its reverse   is also in $EX(n)$. Without loss of generality, we may assume the maximum outdegree of $D$ is larger than or equal to its maximum indegree.

	Let  $v\in \V$ such that $d^+(v)=k$.
	Denote by $$\V_3=\{u\in \V_2|N^+(u)=\V_1\}{\rm ~and~} \V_4=\V_2\backslash \V_3.$$  By Lemma \ref{le2} and Lemma \ref{le12}, we have
	$$e(\V_1,\V\backslash \{v\})\le n-1{\rm~ and ~}\tau\le 2.$$  It follows that
	\begin{eqnarray}
	\nonumber e(D)&=&e(\V_2,\V)+e(\V_1,\V\backslash \{v\})+e(\V_1,v)\\
	\nonumber &\le& k(n-k)+n-1+\tau\\
	&=&-(k-\frac{n}{2})^2+\frac{n^2+4n}{4}+\tau-1. \label{eq4.1}
	\end{eqnarray}
	We distinguish tow cases according to the parity of $n$.
	
	(1) $n$ is odd.  Then by (\ref{eq4.1}), we have
	\begin{equation}\label{eqhhh3.9}
e(D)\le \frac{ n^2+4n+3}{4}.
	\end{equation}
	If  equality in (\ref{eqhhh3.9}) holds, then the equalities in (\ref{eq4.1}) imply $\tau=2$ and
	\begin{equation}\label{h4.2}
	(k-\frac{n}{2})^2= \frac{1}{4}, e(\V_1,\V\backslash \{v\})=n-1, {\rm ~and ~}  e(\V_2,\V)=k(n-k).
	\end{equation}
	Combining (\ref{h4.2}) with  Lemma \ref{le13} and Lemma \ref{le2}, we have $k=(n+1)/{2}$ and
	\begin{equation}\label{eq4.3}
	e(\V_1,u)=1{\rm~ for~ all~} u\in \V\backslash \{v\},
	\end{equation}
	which implies $$e(\V_1,\V_2\backslash \{v\})=n-k-1.$$    It follows that $$e(\V_1,\V_2)=\tau+e(\V_1,\V_2\backslash \{v\})=\frac{n+1}{2}.$$ Since $|\V_1|=({n+1})/{2}$, applying Lemma \ref{le12} we obtain that each $u\in \V_1$ has exactly one successor in $\V_2$.
	
	Note that $e(\V_2,\V)=k(n-k)$ implies
	\begin{equation}\label{h4.4}
	d^+(u)=k~{\rm for~all~}   u\in \V_2.
	\end{equation} Given any vertex $u_1\in \V_2\backslash \{v\}$, by Lemma \ref{le9},  there  exist a vertex  $u_2\in \V_2$ such that  $u_1\rightarrow u_2$. Since $\V_1\rightarrow \V_1$, by Lemma \ref{le10} we obtain
	$$N^+_{\V_1}(u_2)=N^+_{\V_1}(u_1){\rm ~and ~}u_1'\rightarrow \V_1\backslash \{u_1'\}. $$Recall that $u_1'$ has a predecessor $u_3\in \V_1$, which possesses a successor $u_4\in \V_2$, i.e., $u_3\rightarrow u_1'$ and $u_3\rightarrow u_4$. Since $$e(u_1',\V_1)=k-1{\rm~ and~ }e(u_4,\V_1)\ge k-\alpha\ge k-1,$$applying Lemma \ref{le7} we have $D\notin EX(n)$, a contradiction. Therefore, $$e(D)\le \frac{n^2+4n-1}{4}.$$

	Now by  (\ref{eqh3.1}) we obtain
\begin{equation}\label{eqhh121}
	ex(n)=e(D)=\frac{n^2+4n-1}{4}.
\end{equation}
	Moreover,   (\ref{eq4.1}) leads to $$k=(n+1)/{2} {\rm~and~} \tau\ge 1.$$ By Lemma \ref{le12},  we have
	$$\tau=1{\rm~ or~} \tau=2.$$\par
	
	We need the following claim.\\
	
	{\it {\bf Claim 1. } $D$ contains a vertex $z$ such that $$d^+(z)=k {\rm ~and ~}\tau(z)=1.$$}
	
	{\it Proof of Claim 1.} If $\tau=1$, then $v$ is the vertex we need.
	Now assume  $\tau=2$ and $N^-(v)\cap \V_1=\{v_1,v_2\}$.
	Combining (\ref{eq4.1}) and (\ref{eqhh121}), we have
	either
	\begin{equation}\label{eq4.7}
	e(\V_2,\V)= (n-k)k-1,  ~~e(\V_1,\V\backslash \{v\})= n-1
	\end{equation}
	or
	\begin{equation}\label{eq4.8}
	e(\V_2,\V)= (n-k)k, ~~ e(\V_1,\V\backslash \{v\})= n-2.
	\end{equation}

	If (\ref{eq4.7}) holds, then there is exactly one vertex $x\in \V_2$ with outdegree $k-1$ and  all vertices in $\V_2\setminus\{x\}$ have outdegree $k$.  By Lemma \ref{le2}, we have $\V_1\rightarrow \V$.  Given any $u\in \V_2\backslash \{v,x\}$, since $d^+(u)=k$, $\alpha\le 1$, and $v_1,v_2$ cannot be the successors of $u$ simultaneously, we see that $u$ has exactly one successor in $\V_2$, say, $u\rightarrow u_1\in \V_2$. By Lemma \ref{le10}, we obtain $\mathcal{V}_1\backslash \{u'\}\nrightarrow N^+_{\mathcal{V}_1}(u_1)$. Since $\V_1\rightarrow \V_1$, we have
	\begin{equation}\label{eqhhhh1}
	 u'\rightarrow N^+_{\V_1}(u_1)
	\end{equation}
	and there exists $t\in \V_1$ such that $t\rightarrow u'$. Note that $|\V_1|=k$ and $e(\V_1,\V_2)=k$. Lemma \ref{le12} guarantees $t$ has a successor $u_2\in \V_2$. By (\ref{eqhhhh1}) and
	$$e(u_2,\V_1)\ge d^+(u_2)-\alpha\ge k-2,$$ applying Lemma \ref{le7} we obtain $D\notin EX(n)$, a contradiction.

	Now suppose (\ref{eq4.8}) holds.  Then all vertices in $ \V_2$ have outdegree $k$.
	By Lemma \ref{le2} and $\alpha\le 1$, the second equality in  (\ref{eq4.8}) implies there exists exactly one vertex $x$ such that
	\begin{equation}\label{eq4.9}
	e(\V_1,x)=0
	\end{equation} and
	\begin{equation}\label{eqhh4.9}
 \V_1\rightarrow \V\setminus\{x\}.
	\end{equation}
	
	Suppose $x\in \V_2$. Then $$\tau(x)\le e(\V_1,x)+e(N^+_{\V_2}(x),x)\le 1$$ as $\alpha\le 1$. Replacing the role of $v$ by $x$ in (\ref{eq4.1}), we have
	\begin{eqnarray}
	\nonumber e(D)&=&e(\V_2(x),\V)+e(\V_1(x),\V\backslash \{x\})+e(\V_1(x),x).
	\end{eqnarray}
	If $\tau(x)=0$, then $e(D)<ex(n)$, a contradiction. Hence, $\tau(x)=1$, and $x$ is the vertex we need.

	Next we assume   $x\in \V_1$. Then $\V_1\rightarrow \V_2$.  Since $\alpha\le 1$, we have $e(\V_1,\V_2)=k$ and every vertex in $\V_1$ has a successor in $\V_2$.
	Since  $\tau=2$ and all vertices in $\V_2$ have outdegree $k$, by Lemma \ref{le9}, each vertex in $\V_2\backslash \{v\}$ has a successor in $\V_2$. Let $(u_1,u_2)\in  D(\V_2)$. By Lemma \ref{le2}, we have $u_1\nrightarrow v_1$ or $u_1\nrightarrow v_2$. Without loss of generality, we assume $u_1\nrightarrow v_2$. Then $N^+_{\V_1}(u_1)=\V_1\backslash \{v_2\}$. Applying Lemma \ref{le10}, we have $$\V_1\backslash \{v_2\}\nrightarrow N^+_{\V_1}(u_2). $$Hence, by (\ref{eq4.9}) we have  \begin{equation}\label{eqh4.10}
	v_2\rightarrow N^+_{\V_1}(u_2)\backslash \{x\}.
	\end{equation}

	We assert that $v_2$ has no predecessor in $\V_1$. Otherwise, suppose $v_2$ has a predecessor $v_3\in \V_1$. Note that $v_3$ has a successor $v_4\in \V_2$ and $e(v_4,\V_1)\ge k-1$. By Lemma \ref{le7} we obtain $D\notin EX(n)$, a contradiction. Hence,  $v_2=x$.

	Next we assert $v_2\rightarrow \V_1\backslash \{v_2\}$. Otherwise there exists  $t\in \V_1\backslash \{v_2\}$ such that $v_2\nrightarrow t$. Since $N^+_{\V_1}(u_2)\ge k-1$, by (\ref{eqhh4.9}) and (\ref{eqh4.10}), one of $v_2$'s successor $w\in \V_1$ is a predecessor of $t$. Then we have $$v_2\rightarrow v\rightarrow t{\rm ~ and~} v_2\rightarrow w\rightarrow t,$$ which contradicts $D\in EX(n)$.

	Therefore, we have $N^+(v_2)=\{v\}\cup \V_1\backslash \{v_2\}$ and $d^+(v_2)=k$. Moreover,
	$$ \tau(v_2)=e(N^+(v_2),v_2)=e(\V_1\backslash \{v_2\},v_2)+e(v,v_2)=1.$$
	Thus $v_2$ is the vertex we need. This completes the proof of Claim 1.\\

	By Claim 1, without loss of generality, we may assume $\tau=1$, since otherwise we may replace the role of $v$ with $z$ so that  the new digraph is an isomorphism of $D$.

	From (\ref{eq4.1}), we have (\ref{h4.2}), which implies (\ref{h4.4}). By Lemma \ref{le2}, the second equation  in (\ref{h4.2})  indicates $\V_1\rightarrow \V$, which means
\begin{equation}\label{eqh*}
	\V_1\rightarrow \V_1 ~and~ \V_1\rightarrow \V_2.
	\end{equation}

Since $|\V_1|=|\V_2|+1$, by Lemma \ref{le12}, there exists exactly one vertex $y\in \V_1$ having no successor in $\V_2$. It follows from Lemma \ref{le2} that
	\begin{equation}\label{eqhhh319}
		\V_1\setminus \{y\}{\rm ~~matches~~} \V_2.
	\end{equation} Now $\V_1\rightarrow \V_1$ implies there exists a vertex $y_0\in \V_1$ such that $y_0\rightarrow y$.

	Now we distinguish two cases.

	{\it Case 1.1.}  $\V_3=\V_2$, i.e.,
	\begin{equation}\label{eqhhh320}
	N^+(u)=\V_1~{\rm for~ all~} u\in \V_2.
	\end{equation} Since $y_0\in \V_1$ and $\V_1\rightarrow \V_1$, $y_0$ has a predecessor  $y_1\in \V_1$.

	We assert that $y_1=y$. Otherwise, we have $y_1\rightarrow y_0\rightarrow y$ and $y_1$ has a successor $y_2\in \V_2$ such that $N^+(y_2)=\V_1$. Then $y_1\rightarrow y_2\rightarrow y$ and we have two 2-paths from $y_1$ to $y$, a contradiction with $D\in EX(n)$. Hence, $y\leftrightarrow y_0$.

	Moreover,  we have
	\begin{equation}\label{eq4.5}
	e(y,\V_1)=1,~{\rm i.e.},~ N^+_{\V_1}(y)=y_0.
	\end{equation} Otherwise, suppose there is an arc $y\rightarrow y_3$ with $y_3\in \V_1\setminus \{y_0\}$. We have $y_0\rightarrow y\rightarrow y_3$. On the other hand, $y_0$ has a successor   $y_4\in\V_2$ with $N^+(y_4)=\V_1$.  Hence we have another 2-path from $y_0$ to $y_3$, which is $y_0\rightarrow y_4\rightarrow y_3$, a contradiction.

	For any $u\in \V_1\backslash \{y,y_0\}$, we assert either $y_0\rightarrow u$ or there exists $u_1\in \V_1$ such that $u\leftrightarrow u_1$. Otherwise, there exists $u_2\in \V_1$ such that $u_2\rightarrow u$ and $u_2\ne y_0$, and there exists $u_3\in \V_1$ such that $u_3\rightarrow u_2$ and $u_3\ne u$. It follows from (\ref{eq4.5}) that $u_3\ne y$. Since $\V_1\backslash \{y\}$ matches $\V_2$, $u_3$ has a successor $u_4\in \V_2$ with $N^+(u_4)=\V_1$. We have $$u_3\rightarrow u_4\rightarrow u {\rm~and~} u_3\rightarrow u_2\rightarrow u,$$ a contradiction with $D\in EX(n)$.

	By (\ref{eqh*}) we know each vertex in $\V_1$ has exactly one predecessor in $\V_1$. Hence, $D(\V_1)=S(y,y_0)$ or $D(\V_1)$ is the disjoint union of $S(y,y_0)$ and 2-cycles. Combining this with (\ref{eqhhh319}) and (\ref{eqhhh320}), we see that $D$ is an isomorphism of $D_1$.

	{\it Case 1.2.} $\V_3\ne \V_2$, i.e., there exists $u_1\rightarrow u_2$ with $u_1,u_2\in \V_2$. Applying Lemma \ref{le10}, we have
	\begin{equation}\label{eq4.6}
	u_2\in \V_4~{\rm and }~u_1'\rightarrow \V_1\backslash \{u_1'\}.
	\end{equation}
	Given any $u\in \V_4$, we have
	\begin{equation}\label{eqhhhh2}
 N^+_{\V_1}(u)= \V_1\backslash \{u_1'\}.
	\end{equation} Otherwise,  $N^+_{\V_1}(u)\ne \V_1\backslash \{u_1'\}$ means $u'\ne u_1'$. Since $u\in \V_4$ has a successor $u_3\in \V_2$,  applying Lemma \ref{le10}  we have $u'\rightarrow \V_1\backslash \{u'\}$. It follows that $|(\V_1\setminus\{u'\})\cap (\V_1\setminus\{u_1'\})|\ge 1$, a contradiction with Lemma \ref{le2}.

	If $y=u_1'$, since $\V_1$ matches $\V_2$, $y_0$ has a successor $y_1\in \V_2$. Moreover, $\V_1\setminus \{y\}\subseteq N^+(y_1)$. Therefore, we have
	$$y_0\rightarrow y\rightarrow y_2~{\rm and~} y_0\rightarrow y_1\rightarrow y_2 ~{\rm for~all~} y_2\in \V_1\setminus \{y,y_0\},$$
	a contradiction. Thus   $y\ne u_1'$, and (\ref{eq4.6}) implies $u_1'\rightarrow y$.  By Lemma \ref{le2}, $y$ has only one predecessor in $\V_1$. Hence $u_1'=y_0$ and
	\begin{equation}\label{eqhhh323}
	D(\V_1)=S(y,y_0).
	\end{equation}

By (\ref{eqhhh319}), $u_1'$ has a successor $u_1^*\in \V_2$. We assert $u_1^*\in \V_3$. Otherwise $u_1^*\in \V_4$ has a successor $t_1\in \V_2$, which has a predecessor $t_2\in \V_1\setminus\{u_1'\}$.  Hence, we have
$$u_1'\rightarrow u_1^*\rightarrow t_1 {\rm~and~}u_1'\rightarrow t_2\rightarrow t_1,$$ a contradiction with $D\in EX(n)$.

	For any $x_1\rightarrow x_2$ in $D(\V_2)$, applying Lemma \ref{le10} we have $x_2\in \V_4$, which has a successor $x_3\in \V_4$. Then $x_3$ is not  a successor of  $u_1'$. Using (\ref{eqhhh319}) again, we have $x_1\rightarrow \V_1\backslash \{u_1'\}\rightarrow x_3$. If $x_1\ne x_3$, we have two 2-paths from $x_1$ to $x_3$. Hence, $x_1=x_3$. Since $x_1$ is arbitrarily chosen, we conclude that $D(\V_2)$ is the union of 2-cycles and isolated vertices. By Lemma \ref{le12}, these 2-cycles are disjoint. Therefore, by (\ref{eqhhh319}), (\ref{eqhhhh2}) and (\ref{eqhhh323}), $D$ is an isomorphism of $D_2$.
	\\

	(2) $n$ is even. Then by (\ref{eq4.1}) we get \begin{equation}\label{eq4.11}
	e(D)\le\frac{n^2+4n}{4}+1.
	\end{equation}

	If equality in (\ref{eq4.11}) holds, then
	$$k=\frac{n}{2}{\rm ~and~} \tau=2.$$
	Moreover, $e(\V_1,\V_1)\le k$ and  (\ref{eq4.1}) lead to $e(\V_1,\V_2)=k+1$, which implies that  there exists a vertex in $\V_1$ with at least two successors in $\V_2$, which contradicts $\alpha\le 1$.

	Now suppose $$e(D)=\frac{n^2+4n}{4}.$$
	Then (\ref{eq4.1}) leads to either
	\begin{equation*}
	k=\frac{n}{2}, ~~ \tau\in \{1,2\}
	\end{equation*}
	or
	\begin{equation}\label{eq4.12}
	k=\frac{n}{2}+1,~~ \tau=2.
	\end{equation}

	If $k={n}/{2}$, applying Lemma \ref{le12}  we have
	\begin{equation}\label{eqhhh326}
	e(\V,\V_2)\le |\V\backslash \{v\}|= n-1.
	\end{equation}
	It follows that $$e(\V,\V_1)=e(D)-e(\V,\V_2)\ge \frac{n^2}{4}+1.$$ The   pigeonhole principle ensures that there exists a vertex $u\in \V_1$ such that $d^-(u)\ge k+1$, which contradicts our assumption that $k$ is larger than or equal to the maximum indegree of $D$.
	Hence we get (\ref{eq4.12}).

	From (\ref{eq4.1}) we have
	\begin{equation}\label{eq4.13}
	e(\V_2,\V)=\frac{n^2}{4}-1,\quad e(\V_1,\V\backslash \{v\})=n-1,
	\end{equation}
	which implies that all vertices in $\V_2$ have outdegree $k$, $\V_1\rightarrow \V$, and
	$$e(\V_1,\V_2\backslash \{v\})=\frac{n}{2}-2.$$  Since $\tau=2$, we may assume
	$$N^+(v)\cap N^-(v)=\{v_1,v_2\}.$$
By Lemma \ref{le2}, each vertex in $\V_2\setminus \{v\}$ has at most one successor from $\{v_1,v_2\}$ and has a successor from $\V_2$.  Hence,
$$\V_3=\{v\}~{\rm and~} \V_4=\V_2\setminus \{v\}. $$
	Applying Lemma \ref{le12}, there exists  exactly one vertex in $\V_1$, say $y$, without a successor from $\V_2$. Moreover, by Lemma \ref{le2},
	\begin{equation}\label{eqh4.14}
	\V_1\backslash \{v_1,v_2,y\} {\rm ~matches ~}\V_2\backslash \{v\}.
	\end{equation}

	For any vertex $u_1\in \V_4$, it has a successor $u_2\in \V_2$.  Since $d^+(u_1)=k$, we have either
	\begin{equation*}
	u_1\rightarrow v_1,~~u_1\nrightarrow v_2
	\end{equation*}
	or
	\begin{equation*}
	u_1\nrightarrow v_1,~~u_1\rightarrow v_2.
	\end{equation*}
	Without loss of generality, we assume the former case holds. Applying Lemma \ref{le10}, we obtain $$v_2\rightarrow \V_1\backslash \{v_2\}. $$
Moreover, we have
	\begin{equation}\label{eq4.14}
	N^+_{\V_1}(u)=\V_1\backslash \{v_2\} {\rm ~ for~ all~  }u\in \V_4.
	\end{equation}
	Otherwise, there exists a vertex $u_3\in \V_4$ such that $$u_3\nrightarrow v_1{\rm~and ~}  u_3\rightarrow v_2.$$ Applying  Lemma \ref{le10} again we have  $$v_1\rightarrow \V_1\backslash \{v_1\},$$ which contradicts Lemma \ref{le2}.

	Since   $\V_1\rightarrow \V_1$,   Lemma \ref{le2} implies that each vertex in $\V_1$ has exactly one predecessor from $\V_1$. Then $y$ is the predecessor of $v_2$. In fact, if a vertex $v_3\in \V_1\setminus\{y\}$ is the  predecessor of $v_2$, then $v_3$ has a successor $u_4\in \V_2$. Since
	$$e(u_4,\V_1)\ge k-1 {\rm ~and~} e(v_2,\V_1)=k-1,$$
	applying Lemma \ref{le7} we have $D\notin EX(n)$, a contradiction.
	Therefore,
	\begin{equation}\label{eq4.16}
	D(\V_1)=S(y,v_2).
	\end{equation}

	From (\ref{eq4.14}) we deduce that $u_2\ne v$. Otherwise
	we have
	$$u_1\rightarrow v\rightarrow v_2{\rm ~and~} u_1\rightarrow y\rightarrow v_2,$$
	a contradiction.

	Now we assert that $u_2$ has no successor from  $\V_2\setminus\{u_1\}$.  Otherwise suppose $u_4\in \V_2\setminus\{u_1\}$ is a successor of $u_2$. Then we have $$u_1\rightarrow u_2\rightarrow u_4{ \rm ~and~}  u_1\rightarrow \V_1\backslash \{v_2\}\rightarrow u_4,$$a contradiction.

	Therefore, $u_2\rightarrow u_1$ and $u_1\leftrightarrow u_2$ is a isolated 2-cycle in $D(\V_2)$. Since $u_1$ is arbitrarily chosen in $\V_4$,  it follows that $D(\V_4)$ is the disjoint union of 2-cycles, which means $|\V_2|=n/2-1$ is odd.

	If $n/2$ is even,
	combining (\ref{eqh3.1}), (\ref{eqh4.14}), (\ref{eq4.14}) and (\ref{eq4.16}) we deduce that
	$$ex(n)=e(D)=\frac{n^2+4n}{4}$$ and $D$ is an isomorphism of $D_3$.

	If $n/2$ is odd, then from the above arguments and (\ref{eqh3.1}) we have
	\begin{equation}\label{eq4.17}
	ex(n)=e(D)=\frac{n^2+4n}{4}-1.
	\end{equation}
Again, by (\ref{eq4.1}) we have
	$$(\frac{n}{2}-k)^2\le \tau.$$
	Since $\tau\le 2$, by  Lemma \ref{le13} we obtain
	$$k=\frac{n}{2} {\it ~or~} \frac{n}{2}+1.$$
	
	\iffalse
	
	$k=\frac{n}{2}$ or $k=\frac{n}{2}+1$. If $k=\frac{n}{2}$, we have $0\le \tau\le 2$; if $k=\frac{n}{2}+1$, we have $1\le \tau\le 2$.
	
	\fi

	Suppose $k= \frac{n}{2}$. By Lemma \ref{le12}, we have (\ref{eqhhh326}) .
	It follows that
	\begin{equation}\label{eqh4.17}
	e(\V,\V_1)=e(D)-e(\V,\V_2)\ge \frac{n^2}{4}.
	\end{equation}
	Recall that
	  \begin{equation*}\label{eq4.18}
	d^-(u)\le k~{\rm for~ all~} u\in \V.
	\end{equation*}
We obtain
	\begin{equation}\label{eqh4.19}
	d^-(u)= k {\rm ~for ~all~} u\in \V_1
	\end{equation} and
	$$e(\V,\V_1)=\frac{n^2}{4},~~e(\V,\V_2)= n-1.$$
	By Lemma \ref{le12}, each vertex in $\V\backslash \{v\}$ has exactly one successor in $\V_2$.
	
By (\ref{eq4.1}) and (\ref{eq4.17}) we have
	\begin{equation}\label{eq4.20}
	e(\V_2,\V)\ge k(n-k)-2,
	\end{equation}
which implies there exist at least $k-2$ vertices in $\V_2$ which have outdegree $k$.

Let  $t_1\in \V_2\backslash \{v\}$ such that $d^+(t_1)=k$ and $t_1$ has a successor $t_2 \in \V_2$. We assert that either $ \V_1\backslash \{t_1'\}\nrightarrow t_1'$ or $t_2\nrightarrow t_1'$. Otherwise, we have  $t_1\rightarrow \V_1\backslash \{t_1'\}\rightarrow t_1'$ and $t_1\rightarrow  t_2\rightarrow t_1'$, a contradiction. By Lemma \ref{le2}, we obtain $e(\V_1,t_1')\le 1$. It follows that
	$$d^-(t_1')=e(\V_2\backslash \{t_1,t_2\},t_1')+e(\{t_1,t_2\},t_1')+e(\V_1,t_1')\le k-1,$$
which contradicts (\ref{eqh4.19}).

Therefore, we have $$k=\frac{n}{2}+1.$$
	By (\ref{eq4.1}) and (\ref{eq4.17}), we get $\tau=1$ or 2.
	Now we distinguish two cases.

	{\it Case 2.1.} $\tau=1$. (\ref{eq4.1}) and (\ref{eq4.17}) lead to \begin{equation}\label{eq4.26}
	e(\V_2,\V)=(n-k)k ~{\rm and~}
	e(\V_1,\V)=n
	\end{equation}
	which imply
	\begin{equation*}
	d^+(u)=k~{\rm for~all~} u\in \V_2
	\end{equation*}
	and
	\begin{equation}\label{eq4.27}
	e(\V_1,u)=1 {\rm ~for~ all~}  u\in \V.
	\end{equation}
	  Since $|\V_2|=|\V_1|-2$ and $\alpha\le 1$, by (\ref{eq4.27}), there exist  two distinct vertices  $y_1,y_2\in \V_1$ such that
	$$d^+_{\V_2}(y_1)=d^+_{\V_2}(y_2)=0$${\rm ~and ~}
	\begin{equation}\label{eq4.28}
	 \V_1\backslash \{y_1,y_2\}{\rm~ matches ~}\V_2.
	\end{equation}
	\par
	 {\it Subcase 2.1.1}. $\V_3=\V_2$ .  Then
	\begin{equation}\label{eq4.29}
	N^+(u)=\V_1~{\rm for ~all~} u\in \V_2.
	\end{equation}
	
	We will use  the following claim repeatedly.
	\\

	{\it{\bf Claim 2.} For every $u_1\rightarrow u_2\rightarrow u_3$ in $D(\V_1)$ with $u_1\ne u_3$, we have $u_1\in \{y_1,y_2\}$. }

	In fact, if $u_1\notin \{y_1,y_2\}$, then it has a successor  $u_4\in \V_2$. Note that $N^+_{\V_1}(u_4)=\V_1$. We have $u_1\rightarrow u_2\rightarrow u_3$ and  $u_1\rightarrow u_4\rightarrow u_3$, which contradicts $D\in EX(n)$.
	\\

	Given $u\in \V_1$, denote by $$F(u)=\{u\}\cup N^+_{\V_1}(u)\cup N^+_{\V_1}(N^+_{\V_1}(u)).$$
	Given any vertex $u_1\in \V_1\setminus [F(y_1)\cup F(y_2)]$,  $u_1$ has a predecessor $u_2\in \V_1$, which has a predecessor $u_3\in \V_1$. Then we have $u_3\rightarrow u_2\rightarrow u_1$ in $D(\V_1)$. Since $u_1\notin F(y_1)\cup F(y_2)$, then $u_3\notin \{y_1,y_2\}$. Applying Claim 2, we obtain $u_3=u_1$. Hence, any vertex  in $\V_1\setminus[(F(y_1)\cup F(y_2))]$ belongs to a 2-cycle. By (\ref{eq4.27}), these 2-cycles are pairwise disjoint.

	If there is an arc between $y_1$ and $y_2$, say $y_1\rightarrow y_2$, then $y_2\rightarrow y_1$. Otherwise, $y_1$ has a predecessor $y_3\in \V_1\setminus\{y_1,y_2\}$, a contradiction with Claim 2.  By (\ref{eq4.27}), we see that
	$$D(F(y_1)\cup F(y_2))=T(y_1,y_2) $$
	and
	$D(\V_1)=T(y_1,y_2)$ or it is the disjoint union of $T(y_1,y_2)$ and some 2-cycles. Combining this with (\ref{eq4.28}) and (\ref{eq4.29}),  we obtain that $D$ is an isomorphism of $D_4$.

	Now suppose $y_1\nrightarrow y_2$ and  $y_2\nrightarrow y_1$. Let the  predecessors of $y_1$ and $y_2$ in $\V_1$ be  $y_1^*$  and  $y_2^*$, respectively.
	
	Suppose $y_1^*\ne y_2^*$. By (\ref{eq4.27})  and Claim 2, $y_2^*$ has a predecessor $t\in \{y_1, y_2\}$.   If $y_1\rightarrow y_2^*$, then $y_1^*\rightarrow y_2\rightarrow y_2^*$, which contradicts Claim 2. Hence, $y_1\leftrightarrow y_1^*$. Similarly, we have  $y_2\leftrightarrow y_2^*$.

	We assert that $y_1$ has only one successor in $V_1$. Otherwise, there exists $y_3\in \V_1\setminus\{y_1^*\}$ such that $y_1\rightarrow y_3$. Then we have $y_1^*\rightarrow y_1\rightarrow y_3$, which contradicts Claim 2. Hence, $e(y_1,\V_1)=1$. Similarly, we have $e(y_2,\V_1)=1$.
	
	Moreover, applying Claim 2 we have
	$$e(x,\V_1)=0 ~{\rm for~all~}x\in N^+(y_1^*)\cup N^+(y_2^*)\setminus\{y_1,y_2\}.$$
	
	Therefore, by (\ref{eq4.27}), $D(\V_1)$ is the disjoint union of $S(y_1,y_1^*)$,  $S(y_2,y_2^*)$, and some $2$-cycles, where $S(y_1,y_1^*)$,  $S(y_2,y_2^*)$ must appear and the 2-cycles may vanish. Combining this with (\ref{eq4.28}) and (\ref{eq4.29}),  we see that $D$ is an isomorphism of $D_5$.\\

	Suppose $y_1^*= y_2^*$. Since $y_1^*$ has a predecessor from $\{y_1,y_2\}$, without loss of generality, we let $y_1\rightarrow y_1^*$. Applying the same arguments as above, we obtain $$e(y_1,\V_1)=1{\rm ~and  ~}
	 e(x,\V_1)=0 ~{\rm for~all~}x\in N^+(y_1^*)\setminus\{y_1\}.$$
Therefore, $D$ is an isomorphism of  $D_{10}$.

	{\it Subcase 2.1.2.} $\V_3\ne \V_2$. There exist $u_1,u_2\in \V_2$ such that  $u_1\rightarrow u_2$. By (\ref{eq4.27}) and Lemma \ref{le10}, we have $$u_1'=u_2', {\rm ~ ~} u_1'\rightarrow \V_1\backslash \{u_1'\}$$  and
	\begin{equation}\label{eq4.30}
	N^+_{\V_1}(u)=\V_1\backslash \{u_1'\} {\rm ~for~ all~} u\in \V_4.
	\end{equation}
	Moreover,
	\begin{equation}\label{eq4.31}
	u\in \V_4~{\rm ~ if~} e(\V_2, u)\ge 1.
	\end{equation}

	We assert there exists no 2-path in $D(\V_2)$. Otherwise, suppose $D(\V_2)$ contains a 2-path $t_1\rightarrow t_2\rightarrow t_3$.
	By (\ref{eq4.30}) and (\ref{eq4.31}), we have
	$$ t_1,t_2,t_3\in \V_4,{\rm ~~}t_1'=t_2'=t_3'=u_1'$$ and there exists $t_4\in \V_4$ such that $t_3\rightarrow t_4$. If $N^+_{\V_1}(t_1)\rightarrow t_3$, we have
	$$t_1\rightarrow N^+_{\V_1}(t_1)\rightarrow t_3,$$
	which is another 2-path from $t_1$ to $t_3$, a contradiction. Hence we have $N^+_{\V_1}(t_1)\nrightarrow t_3$. Now (\ref{eq4.27}) and (\ref{eq4.28})  imply $t_1'\rightarrow t_3$ and
	$$t_1'\rightarrow t_3\rightarrow t_4,~~t_1'\rightarrow \V_1\backslash \{t_1'\}\rightarrow t_4,$$
	a contradiction.
	Therefore, by (\ref{eq4.31}) we obtain that  $D(\V_4)$ is a disjoint union of  2-cycles.

	By (\ref{eq4.27}), there exists $w\in \V_1$ such that $w\rightarrow u_1'$. Suppose $w$ has a successor $u_3\in \V_2$. Since
	$$e(u_3,\V_1)\ge k-1{\rm~and~} e(u_1',\V_1)=k-1,$$ applying Lemma \ref{le7} we have $D\notin EX(n)$, a contradiction. Hence,  $w$ has no successor in $\V_2$ and it is either $y_1$ or $y_2$. Without loss of generality, we assume $w=y_1$. Then $y_1\leftrightarrow u_1'$.
	
		If  $u_1'$ has no successor in $\V_2$, i.e., $u_1'=y_2$, then $D(\V_1)=S(y_1,y_2)$. Combining this with (\ref{eq4.28}) and (\ref{eq4.30}), we see that $D$ is an isomorphism of $D_6$.

	Suppose $u_1'$ has a successor $u_1^*\in \V_2$. If $u_1^*\in \V_4$, then $u_1^*$ has a successor $g\in \V_2$.  By (\ref{eq4.28}) we have
	$$u_1'\rightarrow \V_1\backslash \{u_1'\}\rightarrow g.$$ On the other hand, we have $u_1'\rightarrow u_1^*\rightarrow g$, a contradiction.
	Hence,  $u_1^*\in \V_3$. Then $D(\V_1)=S(y_1,u_1')$ and $u_1'$ has a successor $u_1^*\in \V_3$.  Thus $D$ is an isomorphism of $D_7$.
	\\

	{\it Case 2.2.} $\tau=2$.  If $D$ contains a vertex $z$ such that  $d^+(z)=k$ and $\tau(z)\le 1$, then replacing the role of $v$ by $z$ and repeating the above arguments, we can deduce that $D$ is an isomorphism of $D_4$, $D_5$, $D_6$, $D_7$ or $D_{10}$.

	Now we assume
	$$\tau(z)=2~{\rm ~for~all~}z\in \V ~{\rm such~that~} d^+(z)=k$$ and
	suppose $v_1, v_2\in \V_1$ are the two predecessors of $v$. Then by Lemma \ref{le2} we have $\V_3=\{v\}$.  By (\ref{eq4.1}) and (\ref{eq4.17}) we get
	either
	\begin{equation}\label{eq4.32}
	e(\V_2,\V)=k(n-k)
	\end{equation}
	or
	\begin{equation}\label{eq4.33}
	e(\V_2,\V)=k(n-k)-1.
	\end{equation}

If (\ref{eq4.32}) holds, then each vertex in $ \V_2$ has outdegree $k$. Since $\tau=2$,  every vertex in $\V_4=\V_2\backslash \{v\}$ has a successor in $\V_2$. It follows that the number of arcs in $D(\V_2)$  is $|\V_2\backslash \{v\}|$, which is  odd. Hence $D(\V_2)$ contains an arc not in any 2-cycle, say, $u_1\rightarrow u_2$ and $u_2\nrightarrow u_1$ with $u_1\in \V_4$.
	Then  by Lemma \ref{le2} and Lemma \ref{le12},  we have
	$$\tau(u_1)\le e(\V_1,u_1)= 1,$$
	a contradiction with our assumption.

Now suppose that (\ref{eq4.33}) holds.   Then $n/2-2$ vertices of $\V_2$ have outdegree $k$ and a vertex $x\in \V_2$ has outdegree $k-1$. By  (\ref{eq4.1}) and (\ref{eq4.33})  we have
	$$ e(\V_1,  \V_1)= \frac{n}{2}+1{\rm ~and~}  e(\V_1,  \V_2\backslash \{v\})=\frac{n}{2}-2.$$ It follows that $\V_1\rightarrow \V_1$. Moreover, $\alpha\le 1$ implies there exists a unique vertex  $y$ in $ \V_1$ without any successor in $\V_2$. By Lemma \ref{le2},
	$$\V_1\backslash \{v_1,v_2,y\}{\rm ~ matches~} \V_2\backslash \{v\}.$$

	For an arbitrary vertex $u\in \V_2\backslash \{v,x\}$, by Lemma \ref{le9} and Lemma \ref{le12}, $u$ has a unique successor $w\in \V_2$. Then $\tau(u)=2$ leads to  $u\leftrightarrow w$.  Therefore, there exist at least $ (n-6)/{4}$ 2-cycles in $D(\V_2)$. By Lemma \ref{le12}, these 2-cycles are pairwise disjoint and $D(\V_2\backslash \{v,x\})$ is the disjoint union of  $ (n-6)/{4}$ 2-cycles.

	Take any arc $u_1\rightarrow u_2$ in $D(\V_2\backslash \{x\})$. Then $u_1$ has exactly one successor in $\{v_1,v_2\}$, say, $v_1$.  Since $\V_1\rightarrow \V_1$, applying Lemma \ref{le10} we have
	$$N^+_{\V_1}(u_2)=\V_1\backslash \{v_2\}{\rm~ and~} v_2\rightarrow \V_1\backslash \{v_2\}. $$Moreover,
	\begin{equation*}
	N^+_{\V_1}(u)=\V_1\backslash \{v_2\} {\rm~for~ all~} u\in \V_2\backslash \{v,x\}.
	\end{equation*}

	Note that $v_2$ has a predecessor $v_3\in \V_1$. If $v_3$ has a successor $v_4\in \V_2$, then   $e(v_4,\V_1)\ge k-1$ and $e(v_2,\V_1)=k-1$.  By Lemma \ref{le7} we obtain $D\notin EX(n)$, a contradiction. Hence, $v_3=y$, i.e., $y\rightarrow v_2$.
	Therefore,
	$$D(\V_1)=S(y,v_2).$$

	Notice $d^+(x)=k-1$. If $N^+(x)\subset \V_1$, then by Lemma \ref{le2} we have
	$$N^+(x)=\V_1\backslash \{v_1\} {\rm ~ or~} N^+(x)=\V_1\backslash \{v_2\}. $$ Therefore, $D$ is an isomorphism of $D_8$.

	Now suppose $N^+(x)\nsubseteq \V_1$, i.e., $x$ has a successor $x_1\in \V_2$.  If $x_1= v$, then $x\nrightarrow y$. Otherwise we have
	$$x\rightarrow y\rightarrow v_2~{\rm and~}x\rightarrow v\rightarrow v_2,$$ a contradiction. Similarly, we have $x\nrightarrow v_2$, since otherwise we have
	$$x\rightarrow v_2\rightarrow z ~{\rm and }~x\rightarrow v\rightarrow z~{\rm for~all~ }z\in \V_1\setminus\{v_2\}.$$
	Therefore, we get
	$$N^+(x)=\{v\}\cup \V_1\setminus\{v_2,y\}.$$
	Take place the role of $v$ by $v_2$, we get
	$$ \mathcal{V}_1(v_2)=\{v\}\cup \V_1\setminus\{v_2\} ~and~ \mathcal{V}_2(v_2)=\{v_2\}\cup \V_2\setminus\{v\}.$$
	Moreover, we have
	$$D(\V(v_2))=S(v_1,v), N^+(x)=\V_1(v_2)\setminus\{y\}$$
	and $\V_1(v_2)\setminus\{v_1,v\}$ matches $\V_2(v_2)$ with $y\rightarrow v_2$.
	 Thus $D$ is an isomorphism of $D_8$.

	Finally, suppose $x_1\ne v$.  Since $D(\V_2)$ contains $({n-6})/{4}$ pairwise disjoint 2-cycles, we have $x_2\in \V_2\setminus\{v,x\}$ such that $x_1\leftrightarrow x_2$. Moreover, $x_2$ has a predecessor $x_2^*\in \V_1$, since $\V_1\rightarrow \V_2$.
	It follows that
	$$x\nrightarrow v_2~{\rm and~} x\nrightarrow x_2^*.$$
	Otherwise we have either
	$$x\rightarrow x_1\rightarrow \V_1\setminus \{v_2\}, x\rightarrow v_2\rightarrow \V_1\setminus \{v_2\}$$
	or
	$$x\rightarrow x_1\rightarrow x_2, x\rightarrow x_2^*\rightarrow x_2,$$
	a contradiction.
	Therefore, $N^+(x)=\V_1\backslash \{v_2,x_2^*\}$ and $D$ is an isomorphism of  $D_9$.
	
	Note that $$e(D_i)=ex(n) {\rm ~for~}  i=1,2,\ldots,10. $$ Applying Lemma \ref{le1}, we get the second part of Theorem 2.1.
	This complete the proof.

	\section*{Acknowledgement}

	Partial of this work was done when Huang was visiting Georgia Institute of Technology with the financial support of China Scholarship Council. He thanks China Scholarship Council and Georgia Tech for their support. He also thanks Professor Xingxing Yu for helpful discussion on graph theory during his visit. The research of Huang
	was supported by the NSFC grant 11401197, and a Fundamental Research
	Fund for the Central Universities.

\end{document}